\documentclass[times]{elsarticle}
\usepackage{color}
\usepackage{amssymb}
\usepackage{geometry}
\usepackage{amsthm}
\usepackage{amsmath}
\usepackage{graphicx}
\usepackage{caption}
\usepackage{float}
\usepackage{subfigure}
\usepackage{diagbox}
\usepackage{stfloats}
\usepackage{enumerate}
\usepackage{sectsty}
\newtheorem{theorem}{Theorem}
\usepackage{epstopdf}
\newtheorem{remark}{Remark}
\newtheorem{lemma}{Lemma}
\usepackage{titlesec}
\usepackage{titletoc}
\sectionfont{\centering}

\usepackage{algorithm,algpseudocode,float}
\usepackage{lipsum}

\newtheorem{example}{Example}[section]

\begin{document}

\begin{frontmatter}
\title{Selecting the Optimal Parameters Results in Double Interpolation: Double AFD}

\author[author1]{Tao Qian}
	\ead{tqian@must.edu.mo}
\author[author1]{Yunni Wu}
	\ead{wyn0109@163.com}
\author[author3]{Wei Qu}
\ead{quwei2math@qq.com}
\author[author4]{Yanbo Wang\corref{cor1}}
\ead{ybwang@wtu.edu.cn}

\address[author1]{Macau Center for Mathematical Sciences, Macau University of Science and Technology, China}
\address[author3]{College of Sciences, China Jiliang University, China}
\address[author4]{Faculty of Mathematics and Statistics, WuHan Textile University, China.}
\cortext[cor1]{Corresponding author:Yanbo Wang}

\begin{abstract}
Let $f$ belong to the Hardy space $H^2(\mathbb{D})$ of the unit disc, and $e_a$ the normalized Szeg\"o (reproducing) kernel of $H^2(\mathbb{D}).$ It is well known that, due to the reproducing kernel property, for any distinct $n$ points $a_1,\cdots,a_n$ in $\mathbb{D}$ the orthogonal projection of $f$ into ${\rm span}\{e_{a_1},\cdots,e_{a_n}\},$ denoted as $P_{{\rm span}\{e_{a_1},\cdots,e_{a_n}\}}(f),$ interpolates $f$ at the points $a_k$'s. The present study further proves that if the $a_k$'s are optimally selected according to certain energy matching pursuit principle, then $P_{{\rm span}\{e_{a_1},\cdots,e_{a_n}\}}(f)$ double interpolates $f$ at the points $a_k$'s, or order $m=2$ interpolation, that is,
\[     P_{{\rm span}\{e_{a_1},\cdots,e_{a_n}\}}(f)(a_k)=f(a_k), \quad {\rm and}\quad
P_{{\rm span}\{e_{a_1},\cdots,e_{a_n}\}}'(f)(a_k)=f'(a_k),\quad k=1,\cdots,n.\] With the accordingly newly defined double Takenaka-Malmquist system, the norm convergence for $n\to \infty,$ the $n$-best approximation for $n$ being fixed, and the related boundary function interpolation are studied.  The such generated new sparse representation, named as double AFD, is shown to outperform the classical AFD. Pointwise interpolations for orders $m>2,$ meaning to simultaneously interpolates all functions $f,f',\cdots,f^{(m-1)}$ at a set of $a_k$'s are, additionally, discussed. For the Hardy space of the upper-half complex plane there exists a counterpart theory.

\end{abstract}

\begin{keyword}
 Complex Hardy $H^2$ spaces\sep  Sparse representation\sep Reproducing kernel Hilbert spaces\sep Functional Hilbert space with a dictionary\sep Higher order interpolation\sep Adaptive Fourier decomposition\\

AMS Subject Classification: 42A50,32A30,32A35,46J15
\end{keyword}
\end{frontmatter}

\section{Introduction: The Hardy space and AFD}
\setcounter{page}{1}
\pagenumbering{arabic}
The original AFD
 (\cite{QWa1}) is also called \emph{Core AFD}, or just AFD in the present paper.
AFD is formulated with the Hardy space of analytic functions of finite energy in the unit disc

$$H^2(\mathbb{D})=\lbrace f:\mathbb{D}\rightarrow \mathbb{C}:f=\sum^\infty_{k=0}c_kz^k, {\rm where}\ \
\|f\|^2=\sum^\infty_{k=0}\vert c_k\vert^2<\infty \rbrace,$$

where $\mathbb{D}$ stands for the open unit disc on the complex plane. It is a fundamental result that any
$f\in H^2(\mathbb{D})$ has non-tangential boundary limits a.e. on the unit circle.
There will be no ambiguity if the boundary limit function is still denoted $f.$
On the boundary the inner product of $f,g\in L^2(\partial \mathbb{D})$ is defined to be
\begin{align}\label{a}
\langle f,g\rangle=\frac{1}{2\pi}\int^{2\pi}_0 f(e^{it})\overline{g}(e^{it})dt.
\end{align}
The mapping that maps
$f\in H^2(\mathbb{D})$ to its boundary limit function is an isometric isomorphism between
  $H^2(\mathbb{D})$
and a closed subspace of
$L^2(\partial \mathbb{D}),$ denoted as $H^2(\partial \mathbb{D}).$ Importance of the
Hardy spaces in functional analysis lays in the fact that there exist direct sum decompositions of
the boundary spaces into the Hardy spaces on the two sides of the boundary. As an example,
 in the boundary value sense,
$$ L^2(\partial \mathbb{D})\cong H^2(\partial \mathbb{D})\oplus H^2(\partial\overline{\mathbb{D}}^c)$$
corresponding to the relations
$$f=f^++f^-, \quad f^+\in H^2(\partial\mathbb{D}), \quad f^-\in H^2(\partial\overline{\mathbb{D}}^c).$$
Moreover, if $f\in L^2(\partial \mathbb{D})$ is real-valued, then
\[ f=2{\rm Re}f^+-c_0, \quad f^+={\rm Proj}_{H^2(\mathbb{D})}f,
\quad c_0\ {\rm is \ the\  0-the \ Fourier\  coefficient\ of}\ f.\]
  For related details see \cite{QWa1}.
We note that,  $L^2(\partial \mathbb{D})$ and $H^2(\partial \mathbb{D})$ are not, but
 $H^2(\mathbb{D})$ is a reproducing kernel Hilbert space (RKHS), where the reproducing kernel is
$$ k_a(z)=\frac{1}{1-\overline{a}z}, \qquad a\in \mathbb{D}.$$
$k_a(z)$ is also called the $a$-parameterized Szeg\"o kernel.  There holds the reproducing kernel property:
for $f\in H^2(\mathbb{D}),$
$$          \langle f,k_a\rangle =f(a).$$

Below, we will be only working with the Hardy and the boundary Hardy spaces, and,
to save the notation, we write $f$ for, identically, the Hardy space projection function $f^+.$ Given a sequence
$\lbrace a_k\rbrace^\infty_{k=1}\subset \mathbb{D}$ with multiplicity allowed, we have a
sequence of \emph{multiple Szeg\"o kernels}
 $\lbrace \tilde{k}_{a_k}\rbrace^\infty_{k=1},$
 where $$\tilde{k}_{a_k}=\left[\left(\frac{\partial k_a}{\partial \overline{a}}\right)^{l(k)-1} \right]_{a=a_k},$$
 where $l(1)=1,$ and $l(k)$ is the multiple of $a_k$ in $(a_1,\cdots,a_k).$

The Gram-Schmidt orthogonalization of sequence $\{\tilde{k}_{a_k}\}$ forms the corresponding  Takenaka-Malmquist (or T-M) system $\lbrace B_k=B_{a_1,\cdots,a_k}\rbrace^\infty_{k=1},$ where
$$B_k(z)=\frac{\sqrt{1-\vert a_k\vert^2}}{1-\bar{a}_kz}\prod^{k-1}_{l=1}\frac{z-a_l}{1-\bar{a}_lz}$$
(see Appendix of \cite{QSAFD}).

The $H^2$-normalization of $k_a$ is
\begin{align}\label{b}
e_a=\frac{\sqrt{1-\vert a\vert^2}}{1-\bar{a}z},
\end{align}

and thus

\begin{align}\label{c}
\langle f,e_a\rangle=\sqrt{1-\vert a\vert^2}f(a).
\end{align}

 {AFD}, or {Core AFD},  is to adaptively select optimal parameters $a_1,\cdots,a_n,\cdots,$ such that
\begin{align}\label{core}
\vert\langle f_k, e_{a_k}\rangle\vert=\max\lbrace \vert\langle f_k, e_a\rangle\vert, a\in\mathbb{D}\rbrace,\qquad k=1,\cdots,n,\cdots
\end{align}
 where  $f_k,$ the so called
\emph{single reduced remainders}, or reduced remainders as used in the previous literature, are defined to be
\begin{align} \label{reduced}
 \ f_k(z)\triangleq \frac{f_{k-1}(z)-\langle f_{k-1},e_{a_{k-1}}\rangle e_{a_{k-1}}(z)}{
\frac{z-a_{k-1}}{1-\overline{a}_{k-1}z}},\quad k=2,3,\cdots \end{align}
with $f_1=f.$
Since, due to reproducing kernel property, $f_{k-1}(z)-\langle f_{k-1},e_{a_{k-1}}\rangle e_{a_{k-1}}(z),$  the numerator part of (\ref{reduced}), has zero $a_{k-1},$  by dividing the corresponding M\"obius transform, we have that  $f_k$ still belongs to $H^2(\mathbb{D}).$ Attainability of the optimal parameter $a_k$ inside the disc is proved in  \cite{QWa1}.
By using the optimally selected parameters $a_k$'s we obtain a fast convergent series expansion of $f.$
 \begin{equation}\label{d}
    \begin{aligned}
                                f=\sum^\infty_{k=1}\langle f_k,e_{a_k}\rangle B_k,
    \end{aligned}   \end{equation}
    where
    \begin{align}\label{TM} B_k(z)=e_{a_k}(z)\prod_{l=1}^{k-1}\frac{z-a_{l-1}}{1-\overline{a}_{l-1}z},
     k=1,2,\cdots  \end{align}
      is the associated Takenaka-Malmquist (TM) orthonormal system.
 Availability of optimal selections of the parameters, fast
convergence, as well as positivity of frequency of each term of the TM system
are proved in \cite{QWa1}.

Due to the orthogonality property of the system $\{B_k\}$ there hold

\begin{align}\label{e}
\Vert f\Vert^2
=\sum^\infty_{k=1}\vert\langle f,B_k\rangle\vert^2=\sum^\infty_{k=1}\vert\langle f_k,e_{a_k}\rangle\vert^2
=\sum^\infty_{k=1}(1-\vert a_k\vert^2)\vert f_k(a_k)\vert^2.
\end{align}

We in particular note the useful relations
$$\langle f, B_k\rangle=\langle g_k, B_k\rangle=\langle f_k, e_{a_k}\rangle,$$
where
\begin{eqnarray}\label{g}
g_k=f-\sum^{k-1}_{l=1}\langle f_l,e_{a_l}\rangle B_l\end{eqnarray} is the $k$'s \emph{standard (orthogonal) remainder} of $f$ with respect to the TM system $\{B_l\}.$ In other words, $g_k$ is the projection of $f$ into the orthogonal complement of ${\rm span}\{B_1,\cdots, B_{k-1}\}.$

If a function $f$ can be explicitly
expressed as
$$f=\sum^n_{k=1}\langle f_k, e_{a_k}\rangle B_k,$$
then we say that $f$ is an \emph{$n$-Blaschke form}.

\begin{remark}
Notice that, due to the reproducing kernel property, for any $a_1,\cdots, a_n$ in $\mathbb{D}$ not necessarily optimally selected, we have the identical relation
\begin{align}\label{e}
f=\sum^n_{k=1}\langle f_k, e_{a_k}\rangle B_k+f_{n+1}(z)\prod_{k=1}^n\left(\frac{z-a_k}{1-\bar{a}_kz}\right),\end{align}
where $f_{n+1}$ is a function in $H^2(\mathbb{D}).$

We thus have the interpolation property,
for $g_{n+1}=f-\sum^n_{k=1}\langle f_k, e_{a_k}\rangle B_k$, there holds
\begin{align}\label{core1}
g_{n+1}(a_k)=0,\qquad k=1,\cdots,n.
\end{align}

At such situation, however, one cannot ensure the convergence
\begin{eqnarray}\label{above} \|g_{n+1}\|=\|f-\sum_{k=1}^n \langle f,B_k\rangle B_k\|\to 0,\end{eqnarray}
unless extra conditions being assumed upon $a_1,\cdots, a_n, \cdots.$

Under the optimal selections (\ref{core}) of the $a_k$'s the above convergence (\ref{above}) holds (\cite{QWa1}). Furthermore, for $f$ in the space $H^2_M(\mathbb{D}), M>0,$\ defined in (\ref{M}) below , there holds
 \begin{align} \label{M} \|g_{n+1}\|\leq \frac{M}{\sqrt{n}}.\end{align}
 The above quantitative estimate is established in \cite{QWa2} following the seminal techniques established in \cite{DT}.
   \end{remark}

As a new discovery, the present paper shows
 \begin{align}\label{e2}
 f=\sum^n_{k=1}\langle \tilde{f}_k, e_{a_k}\rangle
 \tilde{B}_k+\tilde{f}_{n+1}(z)\prod_{k=1}^n\left(\frac{z-a_k}{1-\bar{a}_kz}\right)^2,\end{align}
 where $a_k$ are optimally selected,
 \begin{eqnarray}\label{td} a_k=\arg \max \lbrace \vert\langle \tilde{f}_k, e_a\rangle\vert, a\in\mathbb{D}\rbrace, \end{eqnarray}  and each $\tilde{B}_k$ contains correspondingly double zeros forming what we call \emph{double-zero- or D-TM system}:
 \begin{eqnarray}\label{dTM} \{\tilde{B}_{k}(z)\}_{k=1}^\infty \triangleq \{e_{a_k}(z)\prod_{l=1}^{k-1}\left(\frac{z-a_{l-1}}{1-\overline{a}_{l-1}z}\right)^2\}_{k=1}^\infty,\qquad z\in {\mathbb D},\end{eqnarray}
 and  $\tilde{f}_k$ are called
\emph{double reduced remainders}, defined
\begin{align} \label{rreduced}
 \ \tilde{f}_k(z)\triangleq \frac{\tilde{f}_{k-1}(z)-\langle \tilde{f}_{k-1},e_{a_{k-1}}\rangle e_{a_{k-1}}(z)}{
\left(\frac{z-a_{k-1}}{1-\overline{a}_{k-1}z}\right)^2},\quad k=2,3,\cdots \end{align}
with $\tilde{f}_1=f_1=f.$ The validity of the definition $\tilde{f}_k$ does not only rest on the reproducing kernel property, but also on the optimal selections of $a_{k-1}.$ As a result, unlike the identity (\ref{e}) for all cases,  the new relation (\ref{e2}) is not an identity but strictly depends on optimally selected $\{a_k\}_{k=1}^n\in {\mathbb D}^n.$  As a reward, however, the relation (\ref{e2}) amounts to the double interpolation property at the selected $a_k$'s which strengthens the existing AFD approximation.  \\

We show there further holds
\begin{align}\label{e3}
 f=\sum^\infty_{k=1}\langle \tilde{f}_k, e_{a_k}\rangle
 \tilde{B}_k,\end{align}
 with the same convergent rate if $f\in H^2_M(\mathbb{D}).$

\begin{remark}
We note the following observations: $1^\circ.$ As is shown below, being more sparse than that of AFD, (\ref{e2}) gives rise to a fast convergent reconstruction of the originally given signal $f;$  and $2^\circ.$ If selecting $a_1=0, a_2$ is optimal according to (\ref{core}), and the rest $a_k, k=3,\cdots,$ are according to (\ref{td}), then (\ref{e2}) is an intrinsic mono-component decomposition, as a linear combination of $H^2$ functions having at a.e. points on the boundary non-negative phase derivatives (\cite{QianPhase,Qinner}) (also called non-negative analytic instantaneous frequencies). It can be explicitly written out, for $f_1=f, f_2(z)=\frac{f_1(z)-f_1(0)}{z},
\tilde{f}_3(z)=\frac{f_2(z)-\langle f_2,e_{a_2}\rangle e_{a_2}(z)}{\frac{z-a_2}{1-\overline{a}_2z}},$
\begin{eqnarray}\label{intrin}  f(z)=f(0)+\langle f_2,e_{a_2}\rangle  ze_{a_2}(z)+\sum_{k=3}^{\infty}\langle \tilde{f}_k,e_{a_k}\rangle ze_{a_k}(z)\prod_{l=2}^{k-1}\left(\frac{z-a_l}{1-\overline{a}_lz}\right)^2,\end{eqnarray}
\end{remark}
 where $\tilde{f}_k, k>3,$ are according to (\ref{rreduced}). To see each term is a mono-component we first note that functions of the form $ze_{a}(z)$ are starlike functions with pole $0,$ and hence have non-decreasing continuous phases and therefore have non-negative phase derivatives. Secondly, phase derivatives of boundary values of M\"obius transforms are Poisson kernels which are pointwise positive. Finally, the set of mono-components is closed under the multiplication operation. Due to the sparsity we say that (\ref{intrin}) is an intrinsic mono-components decomposition of $f.$ \\

 The whole paper writing plan is as follows. In \S 2 we validate the algorithm formulation of Double Zero Adaptive Fourier Decomposition, abbreviated as Double-AFD or D-AFD. To this end we prove a main technical lemma. We include a second lemma spelling out the relations between different types of remainders. They will be used to simplify computation and to prove the $n$-Best D-AFD type expansion in the following sections. \S 3  will prove convergence and \S 4 will deduce a convergence rate of D-AFD. In \S 5 we will prove the induced boundary interpolation properties. In \S 6 we will develop the $n$-Best D-AFD approximation. \S 7 addresses superperformance of  D-AFD in reconstructions. We include a number of examples. \S 8 extends partial results of D-AFD, corresponding to $m=2,$ to the $m>2$ cases, and raises some questions for future studies.

\section{Formulation of Double Zero Adaptive Fourier Decomposition, or Double-AFD}

The relation (\ref{e}) holds, in fact, for arbitrary $a_1,\cdots,a_n$ in $\mathbb{D},$
with an application of the reproducing kernel property of $k_a.$ The
optimality of selections of the $a_k$'s is used only when proving
the convergence itself, as well as when deducing the convergence rate (\ref{d}) and (\ref{M}).

We will be working with, in place of TM systems (\ref{TM}),
\emph{double zero-TM systems}, or\ $D$-TM systems, $\{\tilde{B}_k\}_{k=1}^\infty,$ in brief,  where
$\tilde{B}_1(z)=e_{a_1}(z),$ and
\begin{align}\label{dTM}  \tilde{B}_k(z)=e_{a_k}(z)
\prod_{l=1}^{k-1}\left(\frac{z-a_l}{1-\bar{a}_lz}\right)^2, \quad k=2,\cdots.\end{align}
 For any positive integer $N, \{\tilde{B}_k\}_{k=1}^N$  is itself a
 particular finite TM system, called $N$-\emph{double TM system}, and hence an orthonormal system. The term
 $\tilde{B}_k$ is also denoted as $\tilde{B}_k=\tilde{B}_k^{a_k}$ to specially indicate dependence on the last parameter. This notation brings some convenience in further discussions of D-AFD.

We need the following technical lemma.

\begin{lemma} \label{ccore}
  Let $f\in H^2(\mathbb{D})$ be non-trivial and $a_1\in \mathbb{D}$ such that
  \begin{align}\label{optima}
|\langle f,e_{a_1}\rangle|=\max \{|\langle f,e_{a}\rangle| \ :\ a\in \mathbb{D}\}.\end{align}
  Then the single reduced remainder $f_2(z)$ in accordance of (\ref{reduced}) again has $a_1$ as its zero. Or, precisely, there exists $\tilde{f}_2\in  H^2(\mathbb{D})$ such that $$f_2(z)=\frac{f_1(z)-\langle f_1,e_{a_1}\rangle e_{a_1}(z)}{\frac{z-a_1}{1\overline{a}_1z}}=\frac{z-a_1}{1-\overline{a}_1z}\tilde{f}_2(z),$$ where
 $f_1=f.$ As a consequence, for the standard remainder $g_2$ defined in (\ref{g}), there holds
 $$g_2(z)=\left(\frac{z-a_1}{1-\overline{a}_1z}\right)^2\tilde{f}_2(z).$$
  \end{lemma}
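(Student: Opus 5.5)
The idea is to read the optimality condition (\ref{optima}) as a critical-point condition and to show that it is exactly the statement that the numerator $h(z):=f(z)-\langle f,e_{a_1}\rangle e_{a_1}(z)$ has a double zero at $a_1$. Since $h(a_1)=0$ by (\ref{c}), $f_2=h/\frac{z-a_1}{1-\overline{a}_1z}$ is in $H^2(\mathbb{D})$, and $f_2(a_1)=h'(a_1)(1-|a_1|^2)$. So the whole lemma reduces to proving $h'(a_1)=0$, i.e.
\[ f'(a_1)=\langle f,e_{a_1}\rangle e_{a_1}'(a_1)=\sqrt{1-|a_1|^2}f(a_1)\cdot\frac{\overline{a}_1\sqrt{1-|a_1|^2}}{(1-|a_1|^2)^2}=\frac{\overline{a}_1 f(a_1)}{1-|a_1|^2}. \]

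To obtain this, I will consider the smooth function $\Phi(a)=|\langle f,e_a\rangle|^2=(1-|a|^2)|f(a)|^2$ on $\mathbb{D}$. Since $f$ is non-trivial, the maximum value is positive, so $f(a_1)\neq 0$. Moreover, $a_1$ is an interior maximum, and its attainability inside the disc is part of the hypothesis (and is proved in \cite{QWa1}). Hence $\partial\Phi/\partial a=0$ at $a_1$. Since $f$ is holomorphic, $\partial_a |f(a)|^2=f'(a)\overline{f(a)}$ and $\partial_a(1-|a|^2)=-\overline{a}$. This gives
\[ 0=-\overline{a}_1|f(a_1)|^2+(1-|a_1|^2)f'(a_1)\overline{f(a_1)}. \]
Dividing by $\overline{f(a_1)}\neq0$ yields exactly the required identity for $f'(a_1)$. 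The hardest point is this step: making sure the Wirtinger derivative is used correctly, and that $f(a_1)\ne0$ so that the division is legitimate. Both are settled by the non-triviality of $f$.

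From $h(a_1)=h'(a_1)=0$, the function $h$ factors as $\left(\frac{z-a_1}{1-\overline{a}_1z}\right)^2\tilde f_2$ with $\tilde f_2$ holomorphic in $\mathbb{D}$. We have $\tilde f_2\in H^2(\mathbb{D})$ because dividing by a Möbius factor with zero at $a_1$ preserves $H^2$ when the numerator vanishes there. Concretely, $f_2=h/\frac{z-a_1}{1-\overline{a}_1z}$ lies in $H^2$ and vanishes at $a_1$, so the same reasoning applied to $f_2$ gives $\tilde f_2\in H^2$. The key fact used is that $\|\phi\, u\|=\|u\|$ for $u\in H^2$ and an inner function $\phi$, together with the standard division by an inner factor, $H^2\cap\phi^{-1}H^2$.

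Finally, for the standard remainder: with $B_1=e_{a_1}$, (\ref{g}) gives $g_2=f-\langle f_1,e_{a_1}\rangle B_1=h$. Therefore $g_2=\left(\frac{z-a_1}{1-\overline{a}_1z}\right)^2\tilde f_2$, which completes the lemma.
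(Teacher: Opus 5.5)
Your proposal is correct and follows essentially the same route as the paper. Both arguments identify $f_2(a_1)$ with $(1-|a_1|^2)f'(a_1)-\overline{a}_1 f(a_1)$ (you via $h'(a_1)$, the paper via a direct limit), and both make it vanish using the zero Wirtinger derivative of $(1-|a|^2)|f(a)|^2$ at the interior maximum together with $f(a_1)\neq 0$. You also spell out two points the paper leaves implicit: why $f(a_1)\neq 0$, and why $\tilde f_2\in H^2(\mathbb{D})$.
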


\textit{Proof of Lemma \ref{ccore}.}
Since $f_2\in H^2(\mathbb{D}),$ we only need to show
$\lim_{z\to a_1}f_2(z)=0.$
For $z\ne a_1,$
 \begin{eqnarray}   \label{zer}
                  f_2(z)&=&\frac{f(z)-\langle f,e_{a_{1}}\rangle e_{a_{1}}(z)}
       {\frac{z-a_{1}}{1-\overline{a}_{1}z}}\nonumber \\
       &=&\frac{(1-\overline{a}_{1}z)f(z)-(1-|a_1|^2)f(a_1)}{z-a_1}\nonumber\\
       &=&-\overline{a}_1f(z)+
       (1-|a_1|^2)\frac{f(z)-f(a_1)}{z-a_1}\nonumber\\
       &\to & -\overline{a}_1f(a_1)+(1-|a_1|^2)f'(a_1),\qquad {\rm as} \ z\to a_1.
       \end{eqnarray}
       Now we show that the last quantity (\ref{zer}) is zero.

Due to the optimal selection of $a_1$ in accordance with (\ref{optima}) there hold
\begin{align}
0&=\frac{\partial}{\partial z} \left[|\langle f,e_z\rangle|^2\right]_{z=a_{1}}\notag\\
&=\frac{\partial}{\partial z} \left[(1-\vert z\vert^2)\vert f(z)\vert^2\right]_{z=a_{1}}\notag\\
&=\left[\left(-\bar{z}f(z)+(1-\vert z\vert^2)f{'}(z)\right)\overline{f(z)}\right]_{z=a_1}\notag\\
&=\left[-\bar{a}_1f(a_1)+(1-\vert
a_1\vert^2)f{'}(a_1)\right]\overline{f(a_1)}.\notag\end{align}
Since $f(a_1)\ne 0,$ we conclude that the quantity (\ref{zer}) being zero is consequence of the optimality.
The proof is complete.  \\

According to Lemma \ref{ccore}, with $\tilde{f}_1=f$ and an optimally selected
$a_1\in \mathbb{D}$ according to (\ref{core}),
there holds
 \[ f(z)=\langle \tilde{f}_1,e_{a_1}\rangle e_{a_1}(z)+
 \tilde{f}_2(z)\left(\frac{z-a_1}{1-\overline{a}_1z}\right)^2.\]
Then to $\tilde{f}_2$ applying Lemma \ref{ccore} again, we have
$$ \tilde{f}_2(z)=\langle \tilde{f}_2,e_{a_2}\rangle e_{a_2}(z)+
\tilde{f}_3(z)\left(\frac{z-a_2}{1-\overline{a}_2z}\right)^2.$$
Subsequently,
 $$ f(z)=\langle \tilde{f}_1,e_{a_1}\rangle e_{a_1}(z)+
 \langle \tilde{f}_2(z),e_{a_2}\rangle e_{a_2}(z)\left(\frac{z-a_1}{1-\overline{a}_1z}\right)^2+
 \tilde{f}_3(z)\left(\frac{z-a_1}{1-\overline{a}_1z}\right)^2\left(\frac{z-a_2}{1-\overline{a}_2z}\right)^2,$$
  where $a_2$ is selected according to (\ref{core}) with $f$ being replaced by $\tilde{f}_2,$ and
  $$\tilde{f}_3(z)=\frac{\tilde{f}_2(z)-\langle \tilde{f}_2(z),e_{a_2}\rangle e_{a_2}(z)}
 {\left(\frac{z-a_2}{1-\overline{a}_2z}\right)^2}.$$
  Likewise, through iteration we have

\begin{align}\label{second}
f(z)=\sum^n_{k=1}\langle \tilde{f}_k,e_{a_k}\rangle \tilde{B}_k(z)+\tilde{f}_{n+1}(z) \prod_{k=1}^{n}\left(\frac{z-a_k}{1-\overline{a}_k z}\right)^2,
\end{align}
   where
   \begin{align}\label{optimal}
 a_k=\arg \max \{|\langle \tilde{f}_k,e_{a}\rangle| \ :\ a\in \mathbb{D}\}\end{align}
 are attainable, and
 \begin{align}\label{dre} \tilde{f}_{1}(z)=f(z), \ \tilde{f}_{k+1}(z)=
 \frac{\tilde{f}_k(z)-\langle \tilde{f}_k,e_{a_k}\rangle e_{a_k}(z)}
 {\left(\frac{z-a_k}{1-\overline{a}_k z}\right)^2}, \quad k=1, 2,\cdots \end{align}
 are \emph{double reduced reminders}.

   The formed series decomposition is named as
   \emph{Double Zero AFD}, or \emph{Double AFD}, or \emph{D-AFD } in brief.
 In the following sections we study convergence properties of
 \begin{align}\label{second}
 f(z)=\sum^\infty_{k=1}\langle \tilde{f}_k,e_{a_k}\rangle \tilde{B}_k(z),
  \end{align}
  where the orthonormal system $\{\tilde{B}_k\}$ is defined by (\ref{dTM}).

  \begin{remark}\label{ak}
  For a given $f\in H^2(\mathbb{D}),$ to perform Core AFD we produce a sequence of parameters $a_k$ in the unit disc. If performing Double AFD we produce, essentially, a different sequence of parameters $a_k$ in the disc. The former is according to (\ref{core}) with respect to the (single) reduced remainder $f_k$ given by (\ref{reduced}); while the latter is according to (\ref{optimal}) with respect to the double reduced remainder $\tilde{f}_k$ given by (\ref{rreduced}). At each use of $a_k$ from the context there should be no ambiguity.
  \end{remark}

\begin{remark}
The proof of Lemma \ref{ccore} is based on $\frac{\partial}{\partial z}|\langle f,e_{z}\rangle|^2=0$ at $z=a_1,$ that is only a necessary condition of the maximal selection principle (\ref{optima}). In the proof of convergence and convergence rate of Double AFD the full strength of (\ref{optima}) will be used.

The proof  of Lemma \ref{ccore} implies that the negation proposition is also true that amounts saying that if the used parameters do not meet the optimal selection principle (\ref{optima}), then there does not hold a  Double-AFD expansion.
\end{remark}

\begin{remark}
The strength  of double interpolation is fully contained in formula (\ref{second}): Since there holds
\[\tilde{g}_n(z)=f(z)-\sum^{n-1}_{k=1}\langle \tilde{f}_k,e_{a_k}\rangle \tilde{B}_k(z)=\tilde{f}_{n}(z) \prod_{k=1}^{n-1}\left(\frac{z-a_k}{1-\overline{a}_k z}\right)^2,\]
at all the optimally selected distinct parameter points $a_1,\cdots,a_{n-1},$ the projection of $f$ into ${\rm span}\{\tilde{B}_k\}_{k=1}^{n-1},$ as a rational function of degree $2n+1$ with an order one pole at the infinity, second-order-interpolates $f$ at the selected points. Precisely,
\[ f(a_k)=\tilde{g}_n(a_k), \quad f'(a_k)=\tilde{g}_n'(a_k),\quad k=1,\cdots,n-1.\]
\end{remark}

For further development we will prove the following

\begin{lemma} \label{3=}
\begin{align}\label{must} \langle \tilde{f}_k,e_{a_k}\rangle=\langle \tilde{g}_k,\tilde{B}_k\rangle=\langle f,\tilde{B}_k\rangle,\end{align}
where, with $\tilde{g}_1=f,$
 $$\tilde{g}_k=f-\sum_{j=1}^{k-1}\langle f,\tilde{B}_j\rangle \tilde{B}_j, \quad k\ge 2,$$ is the projection of $f$ into the span of $\tilde{B}_1,\cdots,\tilde{B}_{k-1}$ determined by the consecutively selected optimal $a_1,\cdots,a_{k-1}$ in $\mathbb{D}.$
\end{lemma}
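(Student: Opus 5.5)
The plan is to prove the two equalities in (\ref{must}) separately. Both rest on the iterated decomposition (\ref{second}), which Lemma \ref{ccore} makes available for the optimally chosen $a_1,\dots,a_{k-1}$, together with the orthonormality of the finite double TM system $\{\tilde{B}_j\}_{j=1}^N$.

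\textbf{Step 1: $\langle f,\tilde{B}_k\rangle=\langle \tilde{f}_k,e_{a_k}\rangle$.} Apply (\ref{second}) with $n=k-1$:
\[ f=\sum_{j=1}^{k-1}\langle \tilde{f}_j,e_{a_j}\rangle\tilde{B}_j+\tilde{f}_k\,\phi_{k-1},\qquad \phi_{k-1}(z)=\prod_{l=1}^{k-1}\left(\frac{z-a_l}{1-\overline{a}_lz}\right)^2 .\]
Take the inner product with $\tilde{B}_k=e_{a_k}\phi_{k-1}$.

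By orthonormality, every term with $j<k$ is orthogonal to $\tilde{B}_k$. For the last term, $\phi_{k-1}$ is a finite Blaschke product, so $|\phi_{k-1}|=1$ a.e. on $\partial\mathbb{D}$. Using the boundary inner product (\ref{a}), this gives
\[ \langle \tilde{f}_k\phi_{k-1},e_{a_k}\phi_{k-1}\rangle=\langle \tilde{f}_k,e_{a_k}\rangle .\]
Hence $\langle f,\tilde{B}_k\rangle=\langle \tilde{f}_k,e_{a_k}\rangle$.

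\textbf{Step 2: $\langle \tilde{g}_k,\tilde{B}_k\rangle=\langle f,\tilde{B}_k\rangle$.} Since $\tilde{g}_k=f-\sum_{j<k}\langle f,\tilde{B}_j\rangle\tilde{B}_j$ and $\tilde{B}_k\perp\tilde{B}_j$ for $j<k$, the subtracted sum contributes nothing to the inner product with $\tilde{B}_k$.

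\textbf{Consequences.} Step 1 applied for every $j<k$ shows that $\langle f,\tilde{B}_j\rangle=\langle\tilde{f}_j,e_{a_j}\rangle$. So $\tilde{g}_k$ coincides with the remainder $\tilde{f}_k\phi_{k-1}$ in (\ref{second}), which is consistent with the remark preceding the lemma. It also shows that $\tilde{g}_k$ is the orthogonal remainder, i.e. $f$ minus its projection onto ${\rm span}\{\tilde{B}_1,\dots,\tilde{B}_{k-1}\}$.

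\textbf{Main obstacle.} The only delicate point is justifying orthonormality of $\{\tilde{B}_j\}_{j=1}^{k}$. These functions form the Gram--Schmidt TM system for the parameter sequence $(a_1,a_1,a_2,a_2,\dots,a_{k-1},a_{k-1},a_k)$ with the odd-indexed members (those ending with a single factor $e_{a_j}$ where the next entry repeats $a_j$) selected. Orthonormality can be checked directly. For $j<k$,
\[ \langle \tilde{B}_k,\tilde{B}_j\rangle=\Big\langle e_{a_k}\textstyle\prod_{l=j}^{k-1}(\cdot)^2,\ e_{a_j}\Big\rangle .\]
The left entry vanishes to order at least two at $a_j$, so by the reproducing property (\ref{c}) this inner product equals $\sqrt{1-|a_j|^2}$ times the value of that function at $a_j$, which is $0$. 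The normalization $\|\tilde{B}_k\|=\|e_{a_k}\|=1$ follows from unimodularity on the boundary. Nothing beyond Lemma \ref{ccore} (to guarantee that (\ref{second}) holds with $\tilde{f}_k\in H^2(\mathbb{D})$) is needed.
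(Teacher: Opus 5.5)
Your proof is correct, and it reaches the result by a slightly different route from the paper's. The paper proves by induction on $k$ the identity $\tilde g_k=\tilde f_k\,\phi^2_{a_1}\cdots\phi^2_{a_{k-1}}$ (its (\ref{fract})). Its inductive step uses $\tilde g_{k+1}=\tilde g_k-\langle \tilde g_k,\tilde B_k\rangle\tilde B_k$. It then reads off $\langle\tilde f_k,e_{a_k}\rangle=\langle\tilde g_k,\tilde B_k\rangle$ from the unimodularity of the Blaschke factor on $\partial\mathbb{D}$.

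You instead start from the already iterated decomposition $f=\sum_{j<k}\langle\tilde f_j,e_{a_j}\rangle\tilde B_j+\tilde f_k\phi_{k-1}$. This follows purely from the definition of the $\tilde f_j$ and Lemma \ref{ccore}, so nothing is circular. Pairing it with $\tilde B_k$ gives the coefficient identity for each $k$ directly, with no induction. The relation (\ref{fract}) then comes out as a corollary rather than serving as the inductive hypothesis.

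The ingredients are the same in both arguments: orthogonality of the D-TM system and $|\phi_{k-1}|=1$ on the circle. Your ordering is somewhat cleaner. You also verify orthonormality of $\{\tilde B_j\}$ via the reproducing property at $a_j$, which the paper only asserts. Finally, you correctly read $\tilde g_k$ as $f$ minus its projection onto ${\rm span}\{\tilde B_1,\dots,\tilde B_{k-1}\}$, rather than as the projection itself as the statement literally says.
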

\textit{Proof of Lemma \ref{3=}.} Due to the orthogonality between the $\tilde{B}_j, j=1,\cdots,k,$ we have
$\langle g_k,\tilde{B}_k\rangle=\langle f,\tilde{B}_k\rangle.$ To show the left-hand-side equal relation of
(\ref{must}) we use induction. With the convention  $f=f_1=\tilde{f}_1=\tilde{g}_1=g_1,$ we have
\begin{eqnarray*}
\frac{\tilde{g}_2}{\phi^2_{a_1}}=\frac{f-\langle f,e_{a_1} \rangle e_{a_1}}{\phi^2_{a_1}}=\tilde{f}_2,\quad {\rm with}\quad \phi_{a_1}(z)=\frac{z-a_1}{1-\overline{a}_1z}.
\end{eqnarray*}
This gives
\[ \langle \tilde{f}_2,e_{a_2}\rangle=\langle \tilde{g}_2,\tilde{B}_2\rangle.\]
Next we assume
\begin{eqnarray}\label{fract} \frac{\tilde{g}_{k}}{\phi^2_{a_1}\cdots\phi^2_{a_{k-1}}}=\tilde{f}_{k}, \end{eqnarray}
which implies \[ \langle
\tilde{f}_{k},e_{a_{k}}\rangle=\langle \tilde{g}_{k},\tilde{B}_{k}\rangle.\]
Consequently,
\begin{eqnarray*}
\tilde{f}_{k+1}=\frac{\tilde{f}_k-\langle \tilde{f}_k,e_{a_k} \rangle e_{a_k}}{\phi^2_{a_k}}=\frac{\tilde{g}_k-\langle \tilde{g}_k,\tilde{B}_k\rangle \tilde{B}_k}{\phi^2_{a_1}\cdots\phi^2_{a_k}}=\frac{\tilde{g}_{k+1}}{\phi^2_{a_1}\cdots\phi^2_{a_k}},
\end{eqnarray*}
implying
\[ \langle \tilde{f}_{k+1},e_{a_{k+1}}\rangle=\langle \tilde{g}_{k+1},\tilde{B}_{k+1}\rangle.\]
The proof is complete.

\section{Convergence of D-AFD}

\begin{theorem}\label{ConD}
(Convergence of D-AFD) Let $f\in H^2(\mathbb{D})$ and a sequence of double reduced remainders
$\tilde{f}_k$ are defined in (\ref{dre}) through the optimally selected parameters $a_k$'s in accordance of the criterion (\ref{optimal}). Then there holds, in the $H^2(\mathbb{D})$ sense,

\begin{align}\label{series}
f=\sum^\infty_{k=1}\left\langle \tilde{f}_k, e_{a_k}\right\rangle \tilde{B}_k,\end{align}

where $\{\tilde{B}\}_{k=1}^\infty$ is the double TM system (\ref{dTM}) generated by the selected $a_k$'s.
\end{theorem}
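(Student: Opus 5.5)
We argue by contradiction, following the classical convergence proof of Core AFD in \cite{QWa1}, adapted to the D-TM system. By Lemma \ref{3=}, $\langle \tilde{f}_k,e_{a_k}\rangle=\langle f,\tilde{B}_k\rangle$. Since $\{\tilde{B}_k\}$ is orthonormal, Bessel's inequality gives
\[\sum_{k=1}^\infty |\langle f,\tilde{B}_k\rangle|^2\le \|f\|^2<\infty .\]
Hence the series in (\ref{series}) converges in $H^2(\mathbb{D})$ to some $h$, and $\tilde{g}_{n}\to g\triangleq f-h$ in norm. Moreover $g\perp \tilde{B}_k$ for every $k$. The goal is to show $g=0$. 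Suppose instead that $g\neq 0$.

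First we use that the coefficients tend to zero, $|\langle \tilde{g}_k,\tilde{B}_k\rangle|\to 0$, together with optimality. By (\ref{fract}), $\tilde{g}_k=\tilde{f}_k\prod_{l<k}\phi_{a_l}^2$. For any fixed $b\in\mathbb{D}$ we form the function $\tilde{B}^b_k=e_b\prod_{l<k}\phi^2_{a_l}$, which is the last element of an orthonormal TM system with parameter $b$. Then
\[\langle \tilde{g}_k,\tilde{B}_k^b\rangle=\langle \tilde{f}_k,e_b\rangle,\]
because multiplication by an inner function is an isometry. Optimality (\ref{optimal}) then yields
\[|\langle \tilde{g}_k,\tilde{B}^b_k\rangle|\le|\langle \tilde{g}_k,\tilde{B}_k\rangle|\to0 \quad \text{for every } b.\]

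Next we split $\tilde{g}_k=g+(\tilde{g}_k-g)$. The second term contributes at most $\|\tilde{g}_k-g\|\to 0$, so $\langle g,\tilde{B}_k^b\rangle\to 0$ for each fixed $b$. It remains to derive a contradiction by choosing $b$ suitably. There are two cases.

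(i) Suppose $\sum_k(1-|a_k|)=\infty$. The double sequence $a_1,a_1,a_2,a_2,\dots$ then also violates the Blaschke condition, so the TM system $\{B_j\}$ built from it is complete. The D-TM system $\{\tilde{B}_k\}$ is a subsystem of this TM system. We need the companion elements $e_{a_k}'$-type terms $\phi_{a_k}e_{a_k}\prod_{l<k}\phi^2_{a_l}$. Their coefficients are $\langle g,\phi_{a_k}\tilde{B}_k\rangle=\langle \tilde{f}_{k}\cdot(\text{stuff})\rangle$, and these vanish precisely because of the double-interpolation identity: $\tilde{g}_{k+1}$ has a double zero at $a_k$, equivalently Lemma \ref{ccore} makes $\langle \tilde{g}_k,\phi_{a_k}\tilde{B}_k\rangle=0$. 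Passing to the limit gives $g\perp$ all elements of the complete system, hence $g=0$. I expect this check to be delicate: one must verify that the interleaved element is orthogonal to $\tilde{g}_{k}$, using $\tilde{f}_{k+1}$ analytic and the derivative condition at $a_k$, and that orthogonality persists in the limit $g$.

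(ii) Suppose $\sum(1-|a_k|)<\infty$. Let $B$ be the infinite Blaschke product with double zeros at the $a_k$, and write $\prod_{l<k}\phi^2_{a_l}=B/B^{(k)}$, where $B^{(k)}$ is the tail product, which tends to $1$ locally uniformly and in the appropriate boundary sense. Then
\[\langle g,\tilde{B}^b_k\rangle=\langle g\,\overline{B}\,B^{(k)},e_b\rangle\to \langle g\overline{B},e_b\rangle\]
by dominated convergence on the circle, since $|B^{(k)}|=1$ and $B^{(k)}\to1$ a.e. along a subsequence. Hence $P_+(g\overline{B})(b)=0$ for all $b$, i.e.\ $g\overline{B}\perp H^2$. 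Since $g\perp\tilde{B}_k$ for all $k$ and also, as in (i), $g$ is orthogonal to the companion elements, $g$ is orthogonal to the model space $H^2\ominus BH^2$. Thus $g=Bu$ with $u\in H^2$, so $g\overline{B}=u\in H^2$. Combined with $u\perp H^2$, this gives $u=0$, hence $g=0$. The main obstacle is again justifying that $g$ is orthogonal to the full model space $H^2\ominus BH^2$, which requires the companion elements and not just the $\tilde{B}_k$. Alternatively, I would try to avoid this by picking $b$ with $|\langle g\overline{B},e_b\rangle|>0$ directly from the hypothesis $g\ne0$ and deriving a contradiction with $|\langle \tilde{g}_k,\tilde{B}_k\rangle|\to0$, exactly as in \cite{QWa1}.
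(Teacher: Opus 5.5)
Your argument is correct, but it takes a longer route than the paper's. The paper pairs the limit remainder directly with a kernel $e_a$. Since $Q_Nf=\tilde g_{N+1}=\tilde f_{N+1}\prod_{l\le N}\phi_{a_l}^2$ by (\ref{fract}), one has $|\langle \tilde g_{N+1},e_a\rangle|=\prod_{l\le N}|\phi_{a_l}(a)|^2\,|\langle \tilde f_{N+1},e_a\rangle|\le|\langle f,\tilde B^a_{N+1}\rangle|\le|\langle f,\tilde B_{N+1}\rangle|\to0$. Hence $\langle \tilde g,e_a\rangle=0$ for every $a$, so $\tilde g=0$, with no case distinction. (The paper writes the prefactor as $\sqrt{1-\sum_{k\le N}|\langle e_a,\tilde B_k\rangle|^2}$. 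The actual factor is $\prod_{l\le N}|\phi_{a_l}(a)|^2$, but only the fact that it is at most $1$ matters.)

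You instead pair with $\tilde B^b_k$, which carries the growing factor $\prod_{l<k}\phi_{a_l}^2$. Your Step 3 therefore yields only $\langle g,\tilde B^b_k\rangle\to0$. Undoing that factor in the limit is what forces the dichotomy on $\sum(1-|a_k|)$ and the appeal to classical facts: completeness of the TM system of $a_1,a_1,a_2,a_2,\dots$ in the non-Blaschke case, and its being an orthonormal basis of $H^2\ominus BH^2$ in the Blaschke case. Had you tested $\tilde g_k$ against $e_b$ rather than $\tilde B^b_k$ in Step 3, you would have landed on the paper's proof at once.

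The step you flag as delicate is in fact immediate. First, $\langle \tilde g_k,\phi_{a_k}\tilde B_k\rangle=\langle \tilde f_k,\phi_{a_k}e_{a_k}\rangle=\sqrt{1-|a_k|^2}\,\bigl[(1-|a_k|^2)\tilde f_k'(a_k)-\overline{a}_k\tilde f_k(a_k)\bigr]$, and this vanishes by the critical-point condition in the proof of Lemma \ref{ccore}. Second, $\langle \tilde g_n,\phi_{a_k}\tilde B_k\rangle$ does not depend on $n\ge k$, because every $\tilde B_j$ is orthogonal to $\phi_{a_k}\tilde B_k$; so the orthogonality passes to $g$. Also, $\prod_{l<k}\phi_{a_l}^2$ equals $B/B^{(k)}$ only up to a unimodular constant, which is harmless since you only need absolute values.

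What your route buys is structure that the paper's proof leaves implicit. The limit remainder is orthogonal to the entire doubled TM system. Moreover, when $\sum(1-|a_k|)=\infty$, convergence already follows from the first-order condition of Lemma \ref{ccore} alone, without the full maximal selection principle. The paper's route buys brevity and independence from completeness theorems for TM systems.
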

A proof can be drawn as, essentially, the same as that for AFD (see \cite{QWa1}). Below we provide a more intuitive and shorter proof.

\textit{Proof of Theorem \ref{ConD}.}
Since $\{\tilde{B}_k\}_{k=1}^\infty$ is an orthonormal system, by the Riesz-Fisher Theorem, the right-hand-side of (\ref{series}) converges to a function $\tilde{h}\in H^2(\mathbb{D}).$ We hence have
\[ f=\tilde{h}+\tilde{g}, \ \tilde{h}=\lim_{n\to \infty}\tilde{h}_n, \ \tilde{g}=\lim_{n\to \infty}\tilde{g}_n,\]
where
\[ \tilde{h}_n=\sum^{n-1}_{k=1}\left\langle f, \tilde{B}^{a_k}_k\right\rangle \tilde{B}^{a_k}_k,\quad
\tilde{g}_n=f-\sum^{n-1}_{k=1}\left\langle f, \tilde{B}^{a_k}_k\right\rangle \tilde{B}_k^{a_k}.\]
The notation $\tilde{B}^{a_k}_k$ is defined after introducing the double TM system in (\ref{dTM}).
Since
\[ \langle f-\tilde{h},\tilde{B}_k\rangle=0\]
for all $k,$ we have $\tilde{g}\in {\rm span}\{\tilde{B}_k\}^\perp$.  All needed to show is $\tilde{g}=f-\tilde{h}=0.$

Define $Q$ to be  the projection operator from $H^2(\mathbb{D})$ to $\{{\rm span}\{\tilde{B}_k\}_{k=1}^\infty\}^\perp,$ and $Q_n$ the projection
operator from $H^2(\mathbb{D})$ to
$\{{\rm span}\{\tilde{B}_k\}_{k=1}^n\}^\perp, n=1,2,\cdots$. Both of them are linear and self-adjoint.
In particular, since $\tilde{g}\in \{{\rm span}\{\tilde{B}_k\}_{k=1}^\infty\}^\perp\subset \{{\rm span}\{\tilde{B}_k\}_{k=1}^n\}^\perp,$  we have $Q_n\tilde{g}=\tilde{g}.$

 We show $\tilde{g}=0$ by contradiction. Assume  $\tilde{g}\ne 0.$ Then there would exist $a\in\mathbb{D}$ such that $\delta=|\langle \tilde{g},e_a\rangle|>0.$ Such $e_a$ must not be in ${\rm span}\{\tilde{B}_k\}_{k=1}^\infty.$
  As a consequence, $a$ does not coincide with any of the selected ${a_1},\cdots,a_n,\cdots $
We have the relations $\tilde{g}=Q_n\tilde{g}=Q_n(f-\tilde{h})$ and $Q_n\tilde{h}\to0,$ as $n\to \infty.$ There exists $N$ large enough such that
\begin{eqnarray*}
\delta &=&|\langle \tilde{g},e_a\rangle|\\&\leq&
|\langle Q_Nf,e_a\rangle|+|\langle Q_N(\tilde{h}),e_a\rangle|\\
&\leq&|\langle f,Q_N e_a\rangle|+\delta/4\\
&=&\sqrt{1-\sum^N_{k=1}|\langle e_a,\tilde{B}_k\rangle|^2}|\langle f,\tilde{B}_{N+1}^a\rangle|+\delta/4\\
&<&|\langle f,\tilde{B}_{N+1}^a\rangle|+\delta/4.\end{eqnarray*}
Therefore,
\[ |\langle f,\tilde{B}_{N+1}^a\rangle|>\delta/2.\]
This, however, exposes the contradiction that, according to the optimal selection principle, for $N$ large enough, we should select $a$ instead of the $a_{N+1}$ for which $|\langle f,\tilde{B}_{N+1}^{a_{N+1}}\rangle|\leq\delta/2$ from $\lim_{k\to \infty}|\langle f,\tilde{B}_k\rangle|=0.$ The proof is complete.

\section{Convergence Rate of D-AFD}

An order $n$ D-AFD creates a rational function having $2n$ interpolation points (including multiples) to the function to be approximated. It is, therefore, a more efficient approximation methodology than Core AFD, the latter having $n$ interpolation points (see \cite{LQ}). From the energy delay point of view in digital signal processing theory, factorising zero factors in the form of Blaschke products will speed up the convergence (\cite{CS} and \cite{Qinner}). Such effectiveness is demonstrated by experiments in the last part of this paper. Unfortunately, In terms of the convergence rate,  we can only achieve the same degree as that for AFD.  The proof for D-AFD follows the same route as for AFD (\cite{DT},\cite{QWa2}). Since D-TM systems are of a different construction, the result is now re-formulated and a proof is given below.

Let \begin{eqnarray}\label{M}H^2({\mathbb D},M)=\{ f\in H^2({\mathbb D})\ :\ f=\sum_{k=1}^\infty c_ke_{a_k},\sum_{k=1}^\infty |c_k|\leq M\}, \ 0<M<\infty.\end{eqnarray}

\begin{theorem}\label{rate}
If $f\in H^2({\mathbb D},M)$ then the $n$-standard orthogonal remainder $\tilde{g}_n$ for the double TM system
$\{\tilde{B}_k\}_{k=1}^\infty$ satisfies
\[\|\tilde{g}_n\|\leq \frac{M}{\sqrt{n}}.\]
\end{theorem}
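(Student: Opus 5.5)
We follow the DeVore--Temlyakov route used for AFD (\cite{DT},\cite{QWa2}), with $\tilde{g}_n$, the D-TM system and the double reduced remainders in place of their single counterparts. Write $\tilde{g}_{k+1}=\tilde{g}_k-\langle \tilde{g}_k,\tilde{B}_k\rangle \tilde{B}_k$. By orthonormality,
\begin{align*}
\|\tilde{g}_{k+1}\|^2=\|\tilde{g}_k\|^2-|\langle \tilde{g}_k,\tilde{B}_k\rangle|^2 .
\end{align*}
Since $\tilde{g}_k$ is orthogonal to $\tilde{B}_1,\cdots,\tilde{B}_{k-1}$, Lemma \ref{3=} together with (\ref{fract}) gives, for any $a\in\mathbb{D}$,
\begin{align*}
\langle \tilde{g}_k,\tilde{B}_k^{a}\rangle=\langle \tilde{f}_k,e_a\rangle .
\end{align*}
This holds because $\tilde{g}_k=\phi^2_{a_1}\cdots\phi^2_{a_{k-1}}\tilde{f}_k$ and multiplication by a Blaschke product is an isometry. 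The optimal selection (\ref{optimal}) therefore reads
\begin{align*}
|\langle \tilde{g}_k,\tilde{B}_k\rangle|=\sup_{a\in\mathbb{D}}|\langle \tilde{g}_k,\tilde{B}^a_k\rangle| .
\end{align*}

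The key step is a lower bound for this quantity in terms of the representation $f=\sum_j c_je_{b_j}$ with $\sum_j|c_j|\le M$; I write the parameters of $f$ as $b_j$ to avoid a clash with the selected $a_k$. Since $\tilde{g}_k=Q_{k-1}f$ with $Q_{k-1}$ self-adjoint,
\begin{align*}
\|\tilde{g}_k\|^2=\langle \tilde{g}_k,f\rangle=\sum_j \overline{c_j}\langle \tilde{g}_k,e_{b_j}\rangle=\sum_j\overline{c_j}\langle \tilde{g}_k,Q_{k-1}e_{b_j}\rangle .
\end{align*}
Next I use the identity already used in the proof of Theorem \ref{ConD}:
\begin{align*}
Q_{k-1}e_b=\sqrt{1-\sum_{l<k}|\langle e_b,\tilde{B}_l\rangle|^2}\;\tilde{B}_k^{b}\quad\text{up to a unimodular factor},\qquad b\notin\{a_1,\cdots,a_{k-1}\}.
\end{align*}
For $b$ among the selected points, $Q_{k-1}e_b=0$, so those terms vanish. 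Consequently
\begin{align*}
|\langle \tilde{g}_k,Q_{k-1}e_b\rangle|\le|\langle \tilde{g}_k,\tilde{B}_k^b\rangle|\le|\langle \tilde{g}_k,\tilde{B}_k\rangle| ,
\end{align*}
and hence
\begin{align*}
\|\tilde{g}_k\|^2\le M\,|\langle \tilde{g}_k,\tilde{B}_k\rangle| .
\end{align*}
The identity for $Q_{k-1}e_b$ is where the D-TM structure matters and is the main obstacle. I would justify it by noting that $\mathrm{span}\{\tilde{B}_1,\cdots,\tilde{B}_{k-1},\tilde{B}_k^b\}$ is spanned by the (multiple) Szeg\"o kernels at $a_1,a_1,\cdots,a_{k-1},a_{k-1},b$, and that $\tilde{B}_k^b$ is the normalized Gram--Schmidt output of $e_b$ against the first $k-1$ members \emph{together with} the derivative kernels at the $a_l$. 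If this turns out to fail, because $\mathrm{span}\{\tilde{B}_l\}_{l<k}$ is only half of the $2(k-1)$-dimensional space, then I would instead bound $|\langle \tilde{g}_k,e_b\rangle|$ directly through $\langle \tilde{g}_k,e_b\rangle=\langle \tilde{g}_k,P e_b\rangle$. Here $P$ projects onto the orthogonal complement of the full Blaschke-product model space $\phi^2_{a_1}\cdots\phi^2_{a_{k-1}}H^2$, which contains $\tilde{g}_k$. This projection equals $\tilde{B}^b_k$ times a factor of norm at most $1$, again giving $|\langle\tilde{g}_k,e_b\rangle|\le|\langle\tilde{f}_k,e_b\rangle|$. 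I regard this second version as the safer one, since $\tilde{g}_k\in\phi^2_{a_1}\cdots\phi^2_{a_{k-1}}H^2$ is exactly the double interpolation of the Remark after (\ref{second}).

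Finally, set $t_k=\|\tilde{g}_k\|^2$. The energy identity combined with the key bound gives
\begin{align*}
t_{k+1}\le t_k-\frac{t_k^2}{M^2},\qquad t_1=\|f\|^2\le M^2 .
\end{align*}
By the standard DeVore--Temlyakov induction lemma, such a sequence satisfies $t_n\le M^2/n$. The induction is routine: either $t_k\le M^2/(k+1)$ already, or monotonicity of $x\mapsto x-x^2/M^2$ on $[0,M^2/2]$ gives $t_{k+1}\le M^2/k-M^2/k^2\le M^2/(k+1)$. This yields $\|\tilde{g}_n\|\le M/\sqrt{n}$.
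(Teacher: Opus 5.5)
Your second (``safer'') version is correct and is essentially the paper's argument. Write $\Theta=\phi^2_{a_1}\cdots\phi^2_{a_{k-1}}$, so that $\tilde g_k=\Theta\tilde f_k$ by (\ref{fract}). The paper computes $\langle\tilde g_k,e_b\rangle=\sqrt{1-|b|^2}\,\tilde g_k(b)=\Theta(b)\langle\tilde f_k,e_b\rangle$. It then uses $|\Theta(b)|\le 1$ to get $|\langle\tilde g_k,e_b\rangle|\le|\langle\tilde f_k,e_b\rangle|\le|\langle\tilde f_k,e_{a_k}\rangle|=|\langle\tilde g_k,\tilde B_k\rangle|$. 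Your formula $P_{\Theta H^2}e_b=\overline{\Theta(b)}\,\tilde B^b_k$ is the same computation in projection language. The energy identity, the bound $\|\tilde g_k\|^2\le M|\langle\tilde g_k,\tilde B_k\rangle|$ and the DeVore--Temlyakov lemma then go exactly as in the paper.

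You should drop the first version entirely, not keep it as the main line, because the identity $Q_{k-1}e_b\propto\tilde B^b_k$ is false for $k\ge 2$. It holds for the classical TM system, where $B_1,\dots,B_{k-1},B^b_k$ is the Gram--Schmidt orthonormalization of $e_{a_1},\dots,e_{a_{k-1}},e_b$. It does not transfer to the D-TM system, even though a formula of that shape appears in the proof of Theorem~\ref{ConD}. Already for $k=2$, $Q_1e_b=e_b-\langle e_b,e_{a_1}\rangle e_{a_1}$ is a multiple of $e_b\phi_{a_1}$, which has a simple zero at $a_1$. But $\tilde B^b_2=e_b\phi^2_{a_1}$ has a double zero there, so the two are not proportional unless $b=a_1$.

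The confusion is the one you suspected. $\tilde B^b_k$ is proportional to the projection of $e_b$ onto $\Theta H^2$, the orthogonal complement of the $2(k-1)$-dimensional span of the kernels and derivative kernels at the $a_l$. That space is a proper subspace, of codimension $k-1$, of $\mathrm{span}\{\tilde B_l\}_{l<k}^{\perp}$, which is where $Q_{k-1}$ projects. Your claim that $Q_{k-1}e_{a_l}=0$ also fails in general for $l\ge 2$: only $e_{a_1}=\tilde B_1$ is guaranteed to lie in $\mathrm{span}\{\tilde B_l\}_{l<k}$. None of this is needed once you use $\tilde g_k\in\Theta H^2$ directly.

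There is also a small slip in your induction at $k=1$. In the case $t_1>M^2/2$ the monotonicity step does not apply, and it would give $t_2\le 0$, which is false. Use $x-x^2/M^2\le M^2/4$ instead to get $t_2\le M^2/2$; your argument then works for $k\ge 2$.
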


\textit{Proof of Theorem \ref{rate}.}
By definition of $\tilde{g}_m$ and the orthogonality, as well as the relation (\ref{3=}), there holds
\[\|\tilde{g}_{m+1}\|^2=\|\tilde{g}_m\|^2-|\langle \tilde{g}_m,\tilde{B}_m\rangle|^2.\]
Since $f\in H^2({\mathbb D},M),$ there exists a sequence $\{b_k\}\subset {\mathbb D}$ and a sequence of complex numbers $\{c_k\}, \sum_{k=1}^\infty |c_k|\leq M,$ such that
$f=\sum_{k=1}^\infty c_ke_{b_k}$ in the $H^2({\mathbb D})$ sense.
Since $\tilde{g}_m$ is a projection, there follow
\begin{eqnarray}\label{non}
\|\tilde{g}_m\|^2&=&|\langle \tilde{g}_m,f\rangle|\nonumber\\
&=&|\langle \tilde{g}_m,\sum_{k=1}^{\infty}c_ke_{b_k}\rangle|\nonumber\\
&\leq& M\sup_{b_k}|\langle \tilde{g}_m,e_{b_k}\rangle|\nonumber\\
&=& M \sup_{b_k} \sqrt{1-|b_k|^2}|\tilde{g}_m(b_k)|.
\end{eqnarray}
By the maximal selection principle (\ref{optimal}) and the relation (\ref{fract}), there hold
\begin{eqnarray*}
|\langle \tilde{g}_m,\tilde{B}_m\rangle|&=&\sup_{a\in \mathbb{D}}|\langle \tilde{g}_m,\tilde{B}^a_m\rangle|\\
&=&\sup_{a\in \mathbb{D}}|\langle \tilde{f}_m,e_a\rangle|\\
&=&\sup_{a\in \mathbb{D}}\sqrt{1-|a|^2}|\tilde{g}_m(a)||\prod_{k=1}^{m-1}\frac{1-\overline{a}_ka}{a-a_k}|^2\\
&\ge & \sup_{b_k}\sqrt{1-|b_k|^2}|\tilde{g}_m(b_k)||\prod_{k=1}^{m-1}\frac{1-\overline{a}_kb_k}{b_k-a_k}|^2\\
&\ge & \sup_{b_k}\sqrt{1-|b_k|^2}|\tilde{g}_m(b_k)|\\
&\ge & \frac{\|\tilde{g}_m\|^2}{M},
\end{eqnarray*}
where the last step uses the estimate (\ref{non}). Then we have
\[\|\tilde{g}_{m+1}\|^2\leq \|\tilde{g}_m\|^2\left(1-\frac{\|\tilde{g}_m\|^2}{M^2}\right).\]
By taking $A=M^2$ in the following lemma(\cite{QWa2},also\cite{DT}in general),
\begin{lemma}
If a sequence of non-negative numbers $\{d_m\}_{m=1}^\infty$ satisfies
\[ d_1\leq A, \ d_{m+1}\leq d_m\left(1-\frac{d_m}{A}\right),\]
then
\[ d_m\leq \frac{A}{m}.\]
\end{lemma}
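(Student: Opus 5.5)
The statement is the classical recursive-inequality lemma: for non-negative $d_m$ with $d_1\le A$ and $d_{m+1}\le d_m(1-d_m/A)$, we want $d_m\le A/m$ for all $m\ge 1$. I will argue by induction on $m$. The base case $m=1$ is the hypothesis $d_1\le A$.

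For the inductive step, assume $d_m\le A/m$. First note that $0\le d_m\le A$ for every $m$. This holds inductively: if $d_m\in[0,A]$, then $1-d_m/A\in[0,1]$, so $d_{m+1}\le d_m\le A$, and non-negativity is assumed. Consider the function $\varphi(t)=t(1-t/A)$ on $[0,A]$. It is increasing on $[0,A/2]$ and decreasing on $[A/2,A]$, with maximum $A/4$ at $t=A/2$. The argument then splits into two cases.

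\emph{Case $m\ge 2$.} Here $d_m\le A/m\le A/2$, so $d_m$ lies in the region where $\varphi$ is increasing. Monotonicity gives
\[
d_{m+1}\le \varphi(d_m)\le \varphi(A/m)=\frac{A}{m}\Bigl(1-\frac1m\Bigr)=\frac{A(m-1)}{m^2}.
\]
Finally, $\frac{m-1}{m^2}\le\frac{1}{m+1}$ because $(m-1)(m+1)=m^2-1\le m^2$. Hence $d_{m+1}\le A/(m+1)$.

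\emph{Case $m=1$.} We only know $d_1\in[0,A]$, which may lie beyond $A/2$. In that case use the global maximum instead: $d_2\le \varphi(d_1)\le A/4\le A/2$. This is the only place where the monotone-comparison argument needs adjusting, and it is the main, though mild, obstacle.

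Combining the two cases completes the induction. In the application at hand we take $d_m=\|\tilde g_m\|^2$ and $A=M^2$. The hypothesis $d_1=\|f\|^2\le M^2$ holds because $\|f\|\le\sum|c_k|\,\|e_{b_k}\|\le M$. This yields $\|\tilde g_n\|\le M/\sqrt n$, completing Theorem \ref{rate}.
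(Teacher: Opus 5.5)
Your induction is correct. That includes the separate first step: for $m=1$ the comparison point $A/m=A$ lies where $\varphi$ decreases, and the global bound $\varphi\le A/4$ is the right replacement. Your check that $d_1=\|f\|^2\le M^2$ also supplies a hypothesis the paper uses in Theorem \ref{rate} without comment.

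The paper does not prove this lemma; it quotes it from \cite{QWa2} and \cite{DT}. The standard argument there, as I recall it, works with reciprocals rather than with the monotonicity of $\varphi(t)=t(1-t/A)$. If $d_{m+1}>0$ then $0<d_m<A$, and $1/(1-x)\ge 1+x$ on $[0,1)$ gives
\[
\frac{1}{d_{m+1}}\ \ge\ \frac{1}{d_m}\Bigl(1+\frac{d_m}{A}\Bigr)=\frac{1}{d_m}+\frac{1}{A},
\]
so $1/d_m\ge 1/d_1+(m-1)/A\ge m/A$.

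That route has no special case at $m=1$ and yields the sharper bound $d_m\le A d_1/(A+(m-1)d_1)$, which keeps track of $d_1=\|f\|^2$. Its only cost is that it divides, so the case where the sequence reaches $0$ (and then stays $0$) must be set aside. Your version never divides and uses only that the parabola is increasing on $[0,A/2]$, at the price of the special first step. Along the way it also gives the intermediate bound $d_{m+1}\le A(m-1)/m^2$ for $m\ge 2$.
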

 The proof is complete.

\begin{remark}
It would expect that the convergence rate reaches some degree like $O(\frac{1}{\sqrt{2n}})$ but with the greedy type proof this was unable to achieve. We note that although the winding number of the remainder reaches $2n-2,$ the number of optimally selected parameters is, as in AFD, still $n.$ For Blaschke unwinding expansion, that may be related to the present topic, we refer the reader to   (\cite{CS,Qinner,CP}). It is also noted that the boundary function of a Hardy space function may not be smooth, and the Fourier coefficients should not show good regularity.  Incidently, the obtained order of convergence is the same as Shannon's interpolation and the Karhunen-Lo\'eve expansion for random signals.\end{remark}

\section{Interpolation on the Boundary}

In the present section we study the boundary interpolation property of the D-AFD.
We assume that $f$ stands for a real-valued signal. Then there holds
\begin{align}\label{real}
f=2{\rm Re}\{f^+\}-c_0,\end{align}
where $f^+=(1/2)(f+iHf)$ is the projection of $f$ into $H^2(\mathbb{D}),$ $H$ is the Hilbert transform, and $c_k, k=0,\pm 1,\pm 2,\cdots$ denote the complex  Fourier coefficients of $f.$ There holds the following result.

\begin{theorem}\label{ZC}(D-AFD Zero Crossing Theorem)

Assume that the phase function $\Psi_n$ of the real part of the $n$-remainder derived from (\ref{real})
\[ R_n(t)\triangleq f(e^{it})-2{\rm Re}\{\sum_{l=1}^n\langle f,\tilde{B}_l\rangle\tilde{B}_l(e^{it})\}+c_0=2{\rm Re}\tilde{f}_{n+1}(e^{it})\prod_{l=1}^{n}\left(\frac{e^{it}-a_l}{1-\overline{a}_le^{it}}\right)^2\triangleq\rho_n (t)\cos \Psi_n(t),\] where $\rho_n(t)\ge 0,$  is an absolutely
continuous $2\pi$-periodic function. Then $\cos \Psi_n$
has at least $4n$ zeros including multiples. As a consequence, the $n-$th remainder derived from the $n$-D-AFD series interpolates the original real-valued function $f$  at least at $4n$ points, including multiples.
\end{theorem}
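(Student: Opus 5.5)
The plan is to count the winding of the boundary phase $\Psi_n$ over one period and then use the fact that a continuous real phase which increases by $4n\pi$ must cross every level $\pi/2+j\pi$ at least $4n$ times.

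\textbf{Step 1 (remainder formula).} First I verify the displayed identity for $R_n$. By (\ref{second}) and Lemma \ref{3=},
\[
f^+-\sum_{l=1}^n\langle f,\tilde B_l\rangle\tilde B_l=\tilde f_{n+1}\prod_{l=1}^n\phi_{a_l}^2 ,
\]
where $\phi_{a}(z)=\frac{z-a}{1-\overline{a}z}$. Substituting this into (\ref{real}) and taking boundary values gives
\[
R_n(t)=2{\rm Re}\Big(\tilde f_{n+1}(e^{it})\prod_{l=1}^n\phi_{a_l}^2(e^{it})\Big).
\]

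\textbf{Step 2 (phase decomposition).} On $\partial\mathbb D$ I write $\phi_{a_l}(e^{it})=e^{i\theta_{a_l}(t)}$. The phase $\theta_{a_l}$ is smooth and strictly increasing, with derivative equal to the Poisson kernel $\frac{1-|a_l|^2}{|e^{it}-a_l|^2}>0$. Its total increase over $[0,2\pi]$ is $2\pi$, since each Möbius factor has winding number one. I also write $\tilde f_{n+1}(e^{it})=\rho(t)e^{i\psi(t)}$, so that
\[
\Psi_n(t)=\psi(t)+2\sum_{l=1}^n\theta_{a_l}(t).
\]
The hypothesis is that $\Psi_n$ is absolutely continuous and $2\pi$-periodic, or more precisely periodic modulo $2\pi$ jumps. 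I read this as saying that $\psi$ is continuous and that $\psi(t+2\pi)-\psi(t)=2\pi w$ for an integer $w$. This $w$ plays the role of the winding number of the boundary function $\tilde f_{n+1}$. The Blaschke part then contributes exactly $4n\pi$ to $\Psi_n(2\pi)-\Psi_n(0)$.

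\textbf{Step 3 (the main obstacle: $w\ge 0$).} I need the total increase $\Psi_n(2\pi)-\Psi_n(0)=2\pi(2n+w)$ to be at least $4n\pi$, which amounts to showing $w\ge 0$. This is where I expect the difficulty, because $\tilde f_{n+1}$ is only an $H^2$ function. My first attempt is the argument principle applied to the analytic $\tilde f_{n+1}$ on circles $|z|=r\uparrow 1$. The winding there equals the number of zeros inside $|z|<r$, hence is non-negative, and I would pass to the limit using the absolute continuity assumption on the phase.

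If this limiting step resists, I would make the needed regularity explicit in the hypothesis instead. In that reading, "phase function is absolutely continuous and $2\pi$-periodic" is interpreted so that the phase of the analytic factor has non-negative net increment. This is the same convention used in the AFD zero crossing theorem of \cite{QWa1}, and I would cite it.

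\textbf{Step 4 (counting zeros).} Given a continuous $\Psi_n$ that increases by at least $4n\pi$ on $[0,2\pi)$, the intermediate value theorem forces it to pass through at least $4n$ distinct levels of the form $\pi/2+j\pi$ within one period, counting multiplicity. At each such point $\cos\Psi_n=0$, so $\cos\Psi_n$ has at least $4n$ zeros.

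At those zeros $R_n(t)=0$, and $R_n=0$ means
\[
f(e^{it})=2{\rm Re}\sum_{l=1}^n\langle f,\tilde B_l\rangle\tilde B_l(e^{it})-c_0 .
\]
So the real approximant built from the $n$-D-AFD partial sum interpolates $f$ at these $4n$ points, which completes the argument.
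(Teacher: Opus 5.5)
Your Steps 1, 2 and 4 match the paper's argument: the M\"obius phases have Poisson-kernel derivatives and contribute exactly $4n\pi$, and you then count crossings of the levels $\pi/2+j\pi$. The gap is Step 3, which is the only step with real content. There you must show that the net boundary phase change $2\pi w$ of $\tilde f_{n+1}$ is non-negative, and neither of your two routes does this.

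The argument principle gives non-negative integers $N(r)$ for the winding of $\tilde f_{n+1}(re^{it})$, but nothing ties them to $w$. The convergence $\tilde f_{n+1}(re^{it})\to\tilde f_{n+1}(e^{it})$ holds only in $L^2$ and almost everywhere, and winding numbers are not stable under such convergence, especially near points where $\rho_n$ vanishes. Absolute continuity of the \emph{boundary} phase gives no control of the phase on interior circles. The two quantities genuinely differ: $F(z)=(1-z)^2$ has $F(e^{it})=4\sin^2(t/2)\,e^{i(t+\pi)}$, an absolutely continuous phase with $w=1$, while $N(r)=0$ for every $r<1$. So no limit passage recovers $w$. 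The one-sided bound $w\ge\limsup_{r\to1}N(r)$ you would actually need is a nontrivial boundary statement, not a formality. Your fallback, writing $w\ge 0$ into the hypothesis or attributing it to a convention in \cite{QWa1}, assumes exactly what the theorem asserts: that the remainder's phase cannot cancel the $4n\pi$ coming from the squared Blaschke factor.

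The paper fills this step with the inner--outer factorization of $\tilde f_{n+1}$ and the phase-derivative results of \cite{QianPhase}. Inner factors have non-negative boundary phase derivative, and the outer factor is handled by the companion result there. Together these give $\int_0^{2\pi}\Phi_{n+1}'(t)\,dt\ge 0$.

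A self-contained alternative uses only continuity of the phase. Suppose $w=-m<0$, and put $G=z^m\tilde f_{n+1}$; its boundary phase $\chi(t)=\psi(t)+mt$ is continuous and $2\pi$-periodic. Let $u$ be the Poisson extension of $\chi$, $v$ its harmonic conjugate with $v(0)=0$, and $E=e^{-i(u+iv)}$, which has boundary values $e^{H\chi}e^{-i\chi}$. Since $\chi$ is continuous, Zygmund's theorem gives $e^{2|H\chi|}\in L^1$. Jensen's inequality applied to $e^{2v}$ then gives $E\in H^2$, so $GE\in H^1$ with boundary values $|\tilde f_{n+1}|\,e^{H\chi}\ge 0$. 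An $H^1$ function with real boundary values is constant, and $G(0)=0$, so $GE\equiv 0$. Since $E$ has no zeros, $\tilde f_{n+1}\equiv 0$, a contradiction.
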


With the preparations in \cite{QianPhase} (also see \cite{Gar}), the proof is straightforward, as given below.

\textit{Proof of Theorem \ref{ZC}.} We note that
\[ \frac{e^{it}-a_l}{1-\overline{a}_le^{it}}=e^{i\theta_{a_l}(t)},\]
where
\[ \theta'_{a_l}(t)=\frac{1-|a_l|^2}{|e^{it}-a_l|^2}>0\]
is the Poisson kernel with
\[ \int_0^{2\pi}\theta'_{a_l}(t)dt=2\pi, \quad l=1,2,\cdots\]
On the other hand, $\Psi_n (t)=\Phi_{n+1}(t)+2\sum_{l=1}^n \theta_{a_l}(t),$ where $\Phi_{n+1}$ is the phase of the Hardy space function $\tilde{f}_{n+1}.$  Combining the corresponding results for inner and outer functions (\cite{QianPhase}) there holds
\[ \int_0^{2\pi} \Phi'_{n+1}(t)dt \ge 0.\]
Therefore, we have
\[ \int_0^{2\pi} \Psi'_n(t)dt \ge 4n\pi.\]
This shows that $\cos\Psi_n(t)$ has at least $4n$ zeros.
The proof is complete.

\section{$n$-Best D-AFD}

The existing $n$-Best AFD (\cite{QQD}) can be accordingly revised.
We first define some auxiliary concepts. If $f\in H^2(\mathbb{D}),$ and there exists
$a_1,\cdots,a_n \in \mathbb{D}$ and $c_1,\cdots,c_n\in \mathbb{C}$ with $c_n\ne 0$ such that

\begin{align}\label{third}
f(z)=\sum^n_{k=1}c_k \tilde{B}_k(z),
\end{align}
then we say that $f$ is of an $n$-\emph{double Blaschke form}. Note that, similar to the AFD case, with the formulation (\ref{second}) and as a consequence of (\ref{must}), we
have an alternative representation of an $n$-double Blaschke form
\begin{align}
f(z)=\sum^n_{k=1}\langle {f},\tilde{B}_k\rangle \tilde{B}_k(z).
\end{align}

\begin{remark}\label{projection}
Denote by $P_{a_1,\cdots,a_k}$ as the projection operator from $H^2(\mathbb{D})$ to ${\rm span}\{\tilde{B}_j\}_{j=1}^k$ and $Q_{a_1,\cdots,a_k}=I-P_{a_1,\cdots,a_k}$ as projection to the orthogonal complement of ${\rm span}\{\tilde{B}_j\}_{j=1}^k,$ then, as a consequence of the relation (\ref{fract}), there holds
\[
\frac{Q_{a_1,\cdots,a_k}}{\phi^2_{a_1}\cdots\phi^2_{a_k}}=
\frac{Q_{a_k}}{\phi^2_{a_k}}\circ\cdots\circ\frac{Q_{a_1}}{\phi^2_{a_1}}.\]
For AFD in the Hardy spaces there exist counterpart results.
We note that the relevant proofs use the particular constructive structure of the TM and the D-TM systems that are available only for the Hardy spaces (of the unit disc and the upper-half complex plane). For all the other RKHSs consisting of holomorphic functions there do not seem to exist similar results.
\end{remark}

The $n$-Best AFD, being essentially equivalent with the best approximation by fractional functions of degrees not exceeding $n,$ has been well studied (\cite{QQD, QQLZ,JHQW, JQSW}). In relation to D-AFD a similar result may be proved. We will adopt the following notation: Denote by  $^{\bf a}\tilde{B}_k=\tilde{B}_k,$ where $\{\tilde{B}_k\}_{k=1}^n$ is the D-TM system generated by the $n$-sequence ${\bf a}=(a_1,\cdots,a_n).$

\begin{theorem}\label{nBest}
  Let $n$ be given and $f$ be in $H^2(\mathbb{D}).$ If $f$ is not an $m$-double-Blaschke form with $m<n,$ then there exists an
  $n$-tuple ${\bf a}=(a_1,\cdots,a_n)$ such that
  \[ \|f-\sum_{j=1}^n\langle f,{}^{\bf a}\tilde{B}_j\rangle {}^{\bf a}\tilde{B}_j\|=\inf_{{\bf b}\in
  {\mathbb D}^n} \{\|f-\sum_{j=1}^n\langle f,{}^{\bf b}\tilde{B}_j\rangle {}^{\bf b}\tilde{B}_j\|\}.\]
\end{theorem}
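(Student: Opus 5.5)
We reduce the statement to the known existence theorem for $n$-best rational approximation, or argue directly by compactness. Set
\[ d({\bf b})=\|f-P_{\bf b}f\|^2=\|f\|^2-\sum_{j=1}^n|\langle f,{}^{\bf b}\tilde{B}_j\rangle|^2, \]
where $P_{\bf b}$ is the orthogonal projection onto ${\rm span}\{{}^{\bf b}\tilde{B}_j\}_{j=1}^n$. We must show that $d$ attains its infimum $d^*$ on $\mathbb{D}^n$.

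\textbf{Identify the span.} The system $\{{}^{\bf b}\tilde{B}_j\}_{j=1}^n$ is a finite TM system built on the $2n-1$ parameters $b_1,b_1,b_2,b_2,\dots,b_{n-1},b_{n-1},b_n$, restricted to the odd-indexed members. The full TM span on $2n-1$ points (with multiplicity) is the model space $K_{\Theta}$, where $\Theta(z)=\phi_{b_n}(z)\prod_{l<n}\phi_{b_l}^2(z)$; equivalently, it is the space of rational functions $p/q$ with $\deg p\le 2n-2$ and $q=\prod(1-\overline{b}_lz)^{m_l}$. Our span is an $n$-dimensional subspace of it. Because it depends continuously on ${\bf b}$, the map ${\bf b}\mapsto d({\bf b})$ is continuous on $\mathbb{D}^n$, including where the parameters coincide. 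At such points the definition of the system via multiple kernels keeps it orthonormal, and the closed formula for $\tilde{B}_k$ is a polynomial expression in the $b$'s.

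\textbf{Compactness and the boundary.} Take a minimizing sequence ${\bf b}^{(m)}$. After passing to a subsequence, ${\bf b}^{(m)}\to{\bf b}^*\in\overline{\mathbb{D}}^n$. If ${\bf b}^*\in\mathbb{D}^n$, continuity finishes the proof. Otherwise some coordinates tend to $\partial\mathbb{D}$, and we follow the $n$-best AFD argument of \cite{QQD}. Suppose $b_k^{(m)}\to\partial\mathbb{D}$. Then $e_{b_k^{(m)}}\to0$ weakly, and each factor $\phi_{b_k^{(m)}}$ converges locally uniformly to a unimodular constant. We show that along the subsequence ${}^{{\bf b}^{(m)}}\tilde{B}_k\to0$ weakly. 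For $j>k$, the functions ${}^{{\bf b}^{(m)}}\tilde{B}_j$ converge locally uniformly, up to unimodular constants, to the D-TM functions of the reduced tuple in which $b_k$ has been deleted. Weak convergence plus norm $1$ then gives convergence in the $H^2$ sense in the limit, by Vitali together with the orthonormality. Hence
\[ \liminf_m \sum_{j=1}^n|\langle f,{}^{{\bf b}^{(m)}}\tilde{B}_j\rangle|^2\le \sup_{{\bf c}\in\mathbb{D}^{n'}}\sum_{j=1}^{n'}|\langle f,{}^{\bf c}\tilde{B}_j\rangle|^2, \]
where $n'<n$ is the number of interior limit points.

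\textbf{Strict improvement, by induction on $n$.} By the induction hypothesis the $n'$-problem attains its optimum at some ${\bf c}^*$, and since $n'<n$, $f$ is not an $n'$-double Blaschke form. The remainder $g=f-P_{{\bf c}^*}f$ is therefore nonzero. We append interior parameters to ${\bf c}^*$, first one and then, if needed, more. By Lemma \ref{3=} and (\ref{fract}), the energy gain from adding a parameter $a$ is $|\langle g/\Theta_{{\bf c}^*}^2,e_a\rangle|^2$, up to the appropriate Blaschke quotient. This is positive for a suitable $a$ because $g\ne0$, and by the maximal selection of \S2 it is attained. The new value is therefore strictly below the $n'$-optimum, so $d^*<\inf_{\mathbb{D}^{n'}}d$. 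This contradicts the $\liminf$ bound, which forces ${\bf b}^*\in\mathbb{D}^n$.

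\textbf{Main obstacle.} The main obstacle is the degeneration analysis in the compactness step. When a parameter in the double position, that is $b_l$ with $l<n$, escapes to the boundary, the limiting system corresponds to deleting a \emph{squared} Blaschke factor. One has to check that the limit span is contained in the span of a legitimate D-TM system with fewer parameters, and not in some hybrid with a single-multiplicity factor in a non-final position. To handle this, we first prove the $\liminf$ inequality against the larger model space $K_{\Theta}$ for the reduced parameters, whose dimension is the reduced count. We then note that the cancellation of $\tilde{B}_k$ removes exactly the $b_k$-dependence. If this fails, we would fall back on comparing with the $(n-1)$-tuple obtained by deleting $b_k$ and directly estimating $\|{}^{{\bf b}}\tilde{B}_j-\lambda\,{}^{{\bf b}'}\tilde{B}_{j-1}\|\to0$.
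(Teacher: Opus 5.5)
Your compactness and degeneration step is the same as the paper's. If $b_k^{(m)}\to\zeta\in\partial\mathbb{D}$, then ${}^{\mathbf{b}^{(m)}}\tilde{B}_k\to 0$ weakly, while $\phi^2_{b_k^{(m)}}\to\zeta^2$ a.e.\ on the circle. So after deleting the escaping indices, the surviving functions converge in norm to unimodular multiples of the D-TM system of the tuple of interior limits. No ``hybrid'' limit occurs, so the obstacle in your last paragraph is not real. Your $K_\Theta$ workaround would not help anyway, since that space has dimension $2n'-1$, not $n'$.

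The paper then simply asserts that deleting escaping columns makes $f$ a lower-order form. You try to justify ruling out degeneration by strict improvement, and that step has a genuine gap:

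\begin{itemize}
\item Lemma \ref{3=} and (\ref{fract}) hold only for parameters chosen consecutively by the D-AFD rule (\ref{optimal}). For an $n'$-best tuple $\mathbf{c}^*$, only the last entry maximizes a one-variable functional; $c^*_1,\dots,c^*_{n'-1}$ satisfy coupled criticality conditions.
\item So $g=f-P_{\mathbf{c}^*}f$ need not be divisible by $\Theta^2=\prod_{l\le n'}\phi^2_{c^*_l}$. The true gain from appending $a$ is $|\langle f,\Theta^2 e_a\rangle|^2$, which vanishes for every $a$ exactly when $f\perp\Theta^2H^2(\mathbb{D})$.
\item This can happen with $g\neq0$, because ${\rm span}\{{}^{\mathbf{c}^*}\tilde B_j\}_{j\le n'}$ is only $n'$-dimensional inside the $2n'$-dimensional space $H^2(\mathbb{D})\ominus\Theta^2H^2(\mathbb{D})$.
\item Concretely, take $c=c^*_1$ and write $H^2(\mathbb{D})={\rm span}\{e_c\}\oplus{\rm span}\{\phi_c e_c\}\oplus\phi_c^2H^2(\mathbb{D})$. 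Then $g$ contains the component $\langle f,\phi_c e_c\rangle\phi_c e_c$, with $\langle f,\phi_c e_c\rangle=\sqrt{1-|c|^2}\,\bigl(-\overline{c}f(c)+(1-|c|^2)f'(c)\bigr)$. This is exactly the quantity that Lemma \ref{ccore} kills via one-variable maximality, and joint optimality does not impose it.
\item That component is orthogonal to everything you can append, since all appended functions lie in $\Theta^2H^2(\mathbb{D})$. So appending ``more'' parameters does not help either.
\end{itemize}

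The step is valid for $n'=1$, where the $1$-best parameter is the D-AFD parameter. For $n'\ge 2$ you give no reason why an $n'$-best tuple cannot have $f\perp\Theta^2H^2(\mathbb{D})$ with $g\neq 0$. In effect you transplanted the classical argument, where the span is the full model space and $g\in\Theta H^2(\mathbb{D})$ automatically.

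The fix is to drop the contradiction altogether. Your limit computation gives more than the bound by a supremum: it gives $\|f\|^2-d^*=\sum_{j\le n'}|\langle f,{}^{\mathbf{c}}\tilde B_j\rangle|^2$ exactly, where $\mathbf{c}$ is the specific tuple of interior limits in their original order. Since ${}^{(\mathbf{c},a_{n'+1},\dots,a_n)}\tilde B_j={}^{\mathbf{c}}\tilde B_j$ for $j\le n'$, padding $\mathbf{c}$ at the end with arbitrary interior parameters can only enlarge the span. So the padded tuple lies in $\mathbb{D}^n$ and attains the infimum. This proves the statement without induction and without strict improvement. It does not even use the hypothesis on $f$, which is only needed if you want every minimizing sequence to stay inside $\mathbb{D}^n$.
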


\begin{remark}
There exist a number of alternative
proofs for existence of the classical $n$-Best AFD of which the earliest studies can refer to
\cite{W,B1} and the references thereby. The latest published proofs include \cite{QQD,QQLZ,JHQW,JQSW}.The reason of the interest and repeated proofs are due to significance  of the problem, theoretical and practical, complication of the old proofs, especially intentions of generalization to Hilbert spaces with a dictionary, including RKHSs, and in particular those consisting of holomorphic functions (\cite{QQLZ}), as well as random versions arising from practical problems \cite{QSAFD,QQD,QZLQ}.  The existing proofs can be divided into two groups:
(i) the order of parameters in ${\bf a}$ is changeable; (ii) the order is unchangeable. The group (i) is for the cases where the used orthonormal system is formulated by Gram-Schmidt orthogonalization. The new case in the present paper belongs to the class (ii): When an $n$-tuple ${\bf a}$ is selected according to the D-AFD selection principle (\ref{optimal}) and ${\bf b}$ is a non-trivial permutation of ${\bf a},$ then, ${\rm span}\{^{\bf a}\tilde{B}_k\}_{k=1}^n\ne{\rm span}\{^{\bf b}\tilde{B}_k\}_{k=1}^n,$ in general. Due to this reason most proofs in the previous studies cannot be adapted to the present case. We, however, can achieve a proof following \cite{JHQW} that is effective to the unchangeable cases. We give a sketch of the proof. For more details the reader is referred to \cite{JHQW}.  \cite{JQSW} deals with a non-commutative hyper-complex case that is also referred to the method of \cite{JHQW}.
\end{remark}

\textit{Proof of Theorem \ref{nBest}.} Basic mathematical analysis shows that through a sequence of $n$-tuples $\{\bf a^{(j)}\}_{j=1}^\infty$ we can reach the infimum value as the corresponding limit. Without loss of generality we can assume that $\{\bf a^{(j)}\}_{j=1}^\infty$ itself is convergent and $\lim_{j\to \infty}{\bf a^{(j)}}={\bf a}=(a_1,\cdots,a_n).$ Assuming that $f$ is not an $m$-double-Blaschke form for any $m<n,$ we show that ${\bf a}\in {\mathbb D}^n,$ and consequently obtain that at ${\bf a}$ the infimum value is attained. We show that  $a_k\in {\mathbb D}, k=1,\cdots,n.$ We first treat the case $k=n.$ We assume the opposite assertion $|a_n|=1$ and will arrive a contradiction. We will show that for the given $f\in H^2({\mathbb D}),$ uniformly for the first $n-1$ parameters,
\begin{align}\label{totoshow} \lim_{j\to \infty}|\langle f,^{{\bf a}^{(j)}}\tilde{B}_n\rangle|^2=0.\end{align}
When this happened it shows that the last parameter sequence  $\{a^{(j)}_n\}_{j=1}^\infty$ just has no role with the approximation, and one could delete the whole sequence without interfering the converge to the above assumed infimum.

To show (\ref{totoshow}), through a density argument, we may assume that $f$ is a bounded function, say $|f|\leq M.$ Assuming $\lim_{j\to\infty}|a^{(j)}_n|=1,$ then the last term in the summation gives rise to the energy
\begin{eqnarray}\label{GBVC} |\langle f,^{{\bf a}^{(j)}}\tilde{B}_n\rangle|^2\leq
M(1-|a^{(j)}_n|)^2\left(\int_0^{2\pi}\frac{1}{|1-\overline{a^{(j)}_n}e^{it}|})\right)^2\to 0,\quad {\rm as}\ j\to \infty,\end{eqnarray}
proved through an elliptic integral (\cite{JHQW}).   Next we show, for any $k_0=1,\cdots,n-1,$ the limit $a_{k_0}$ of $a^{(j)}_{k_0}$ as $j\to \infty,$ cannot be on the unit circle either.
Let $k=k_0$ be any fixed, ranging among $1,\cdots, n-1,$ and $|a_{k_0}|=1.$ Then all the entries in the systems $^{{\bf a}^{(j)}}\tilde{B}_k, k\ge k_0,$
contain, as its factor, either $e_{a^{(j)}_{k_0}}$ or $\phi^{2}_{a^{(j)}_{k_0}}.$ The $j$'s $n$-double-TM system is $\{^{{\bf a}^{(j)}}\tilde{B}_k\}_{k=1}^n,$ and precisely,
\begin{eqnarray*}
^{{\bf a}^{(j)}}\tilde{B}_1,\cdots,^{{\bf a}^{(j)}}\tilde{B}_{k_0},\cdots,
^{{\bf a}^{(j)}}\tilde{B}_n.
\end{eqnarray*}
As $j\to\infty,$ due to the just proved (\ref{GBVC}), and the limiting relation
\[ \lim_{j\to\infty}\phi_{{a_{k_0}}^{(j)}}(e^{it})=-a_{k_0}\quad \ {\rm a.e.\ } t\in [0,2\pi],\]
 proved in (\cite{JHQW}), by invoking the Lebesgue dominated convergence theorem, the terms of the above
 $n$-tuples converge, as $j\to \infty,$  respectively, in the weak-$H^2$-sense to
\begin{eqnarray*}
e_{a_1},\cdots,e_{a_{k_0-1}}\phi^2_{a_1}\cdots\phi^2_{a_{k_0-2}} ,0, e_{a_{k_0+1}}\phi^2_{a_1}\cdots\phi^2_{a_{k_0-1}}(-a_{k_0})^2,\cdots,
e_{a_n}\phi^2_{a_1}\cdots\phi^2_{a_{k_0-1}}(-a_{k_0})^2\phi^2_{a_{k_0+1}}\cdots\phi^2_{a_{n-1}}.
\end{eqnarray*}
It amounts to saying that if the limit $a_{k_0}$ of $a^{(j)}_{k_0}$ belongs to $\partial {\mathbb D},$ then this sequence has no contribution to the optimization. All such columns $a^{(j)}_{k_0}$ have no role and may be deleted.  This implies that $f$ is, in fact, an m-double Blaschke form for some $m<n.$ The proof is complete.

 \section{Superperformance in Reconstruction, Experimental Examples}
 In this section, we present three numerical examples to illustrate and compare the nonlinear approximation performance of the classical core AFD and the proposed D-AFD.
The comparison is conducted from the viewpoint of $n$-term approximation in the $L^2$ sense.

The three test functions are designed to reflect different levels of regularity and structural complexity.
The first example is a piecewise smooth periodic function with derivative discontinuity, representing a typical scenario in which the target function exhibits structural transitions while remaining global regularity.
The second example consists of a localized spike superimposed on a smooth oscillatory background, mimicking functions with concentrated transient components and reduced local regularity.
The third example is constructed as a linear combination of squared Blaschke products, which serves as a model problem whose intrinsic structure is closely related to the modified double Blaschke forms employed by the D-AFD.

For all three examples, we report the relative $L^2$ reconstruction errors with respect to the number of approximation terms, and visual comparison of the reconstructed signals obtained by the two methods.
All experiments are performed under the same parameter settings and stopping criteria, in order to ensure a fair and objective comparison of their approximation capabilities.

\begin{example} We consider the following piecewise function on $[0,2\pi]$:
\begin{equation}
\label{eq:example1_balanced_final}
f(t) =
\begin{cases}
\displaystyle \sin(4t) - \frac{1}{4} t,
& t \in [0,\pi],\\[8pt]
\displaystyle \sin(4t) + \frac{1}{2}(t-\pi) - \frac{1}{4} t,
& t \in (\pi,2\pi].
\end{cases}
\end{equation}
The function is continuous on $[0,2\pi]$ while its first derivative has a jump discontinuity at $t=\pi$.
This example is used to evaluate the approximation performance of the core AFD and the D-AFD for functions with reduced regularity caused by structural transitions.
\end{example}
The reconstruction results of Example~1 obtained by the core AFD and the D-AFD are shown in Fig.~\ref{f1}. The dot and un-dot black curves, blue curves and the red curves represent, respectively, the graph of the given function $f,$  approximations by core AFD and by D-AFD with different orders (terms).
As the number of terms increases, both methods provide progressively improved approximations of the target function. In particular, in Picture 1, the $6$-term D-AFD and the $12$-term core AFD are nearly identical, and both are very close to the true graph of $f.$ Noticeably, the $6$-term AFD has remarkably more errors. Picture 2 again repeats this phenomenon: $16$-core AFD is almost the same as $8$-D-AFD, while $8$-core AFD is far away. Picture 3 shows that $10$-D-AFD and $20$-core AFD visually coincide with the true graph while $10$-core AFD is far away. In summary, with high efficiency D-AFD reconstructs the given signal with approximately double speed as that of core AFD. This is further reflected by the smaller residual amplitudes shown in Fig.~\ref{f2}.
This advantage is quantitatively confirmed by the relative $L^2$ error decay curves in Fig.~\ref{f3}, where the D-AFD exhibits a substantially faster error reduction as $n$ increases.
For the two types of methods the distributions of the selected parameters $\{a_k\}_{k=1}^{10}$ are shown in Fig.~\ref{f4}: We observe that for D-AFD the parameters $\{a_k\}$ are more concentrate in the center of the disc. This would reduce the singularity, and thus the computation as well, and increase the accuracy.

 	\begin{figure}[H]
       \scriptsize
		\centering
		\subfigure{
			\includegraphics[width=4.5cm]{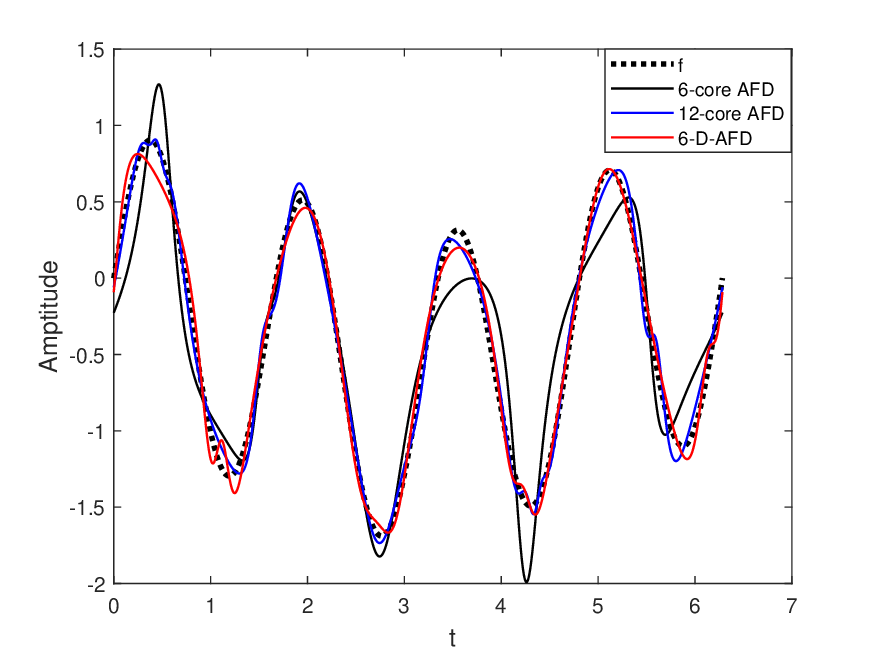}
		}
		\quad
		\subfigure{
			\includegraphics[width=4.5cm]{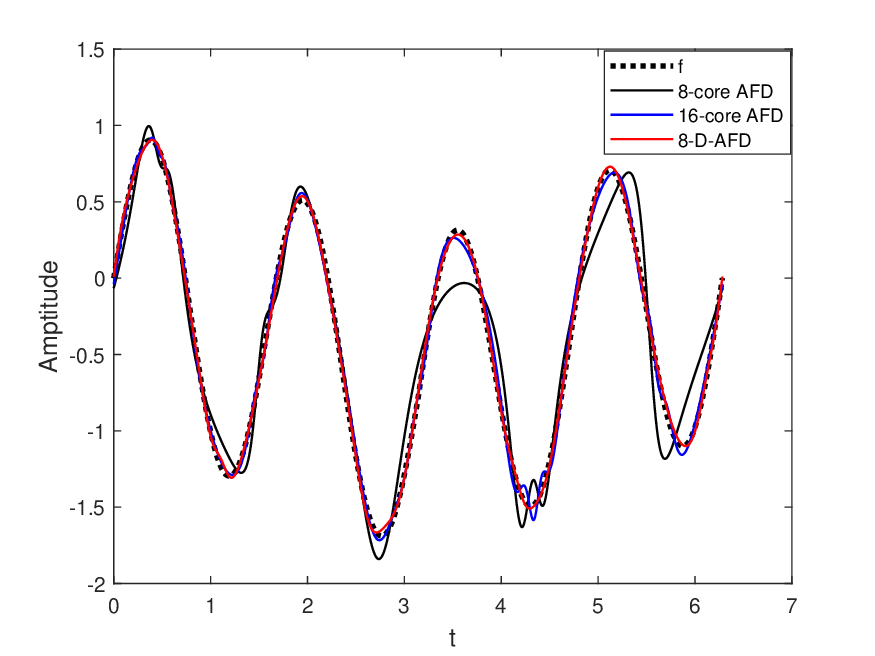}
		}
		\quad
		\subfigure{
			\includegraphics[width=4.5cm]{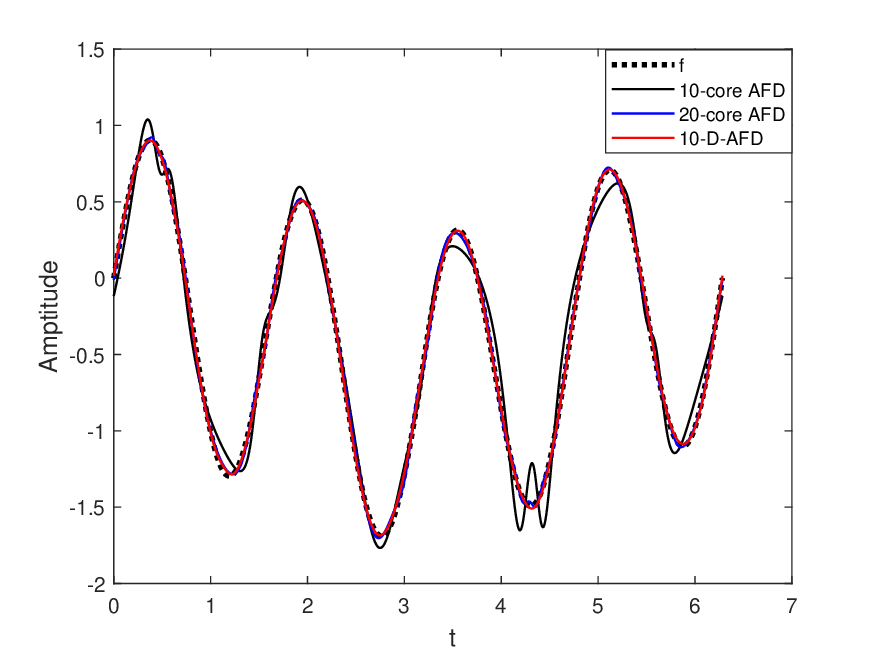}
		}
		\caption{Reconstruction results of Example 1 obtained by core AFD (blue) and D-AFD (red), compared with the target function $f$ (black dashed). }
		\label{f1}		
	\end{figure}
 	\begin{figure}[H]
       \scriptsize
		\centering
		\subfigure{
			\includegraphics[width=4.5cm]{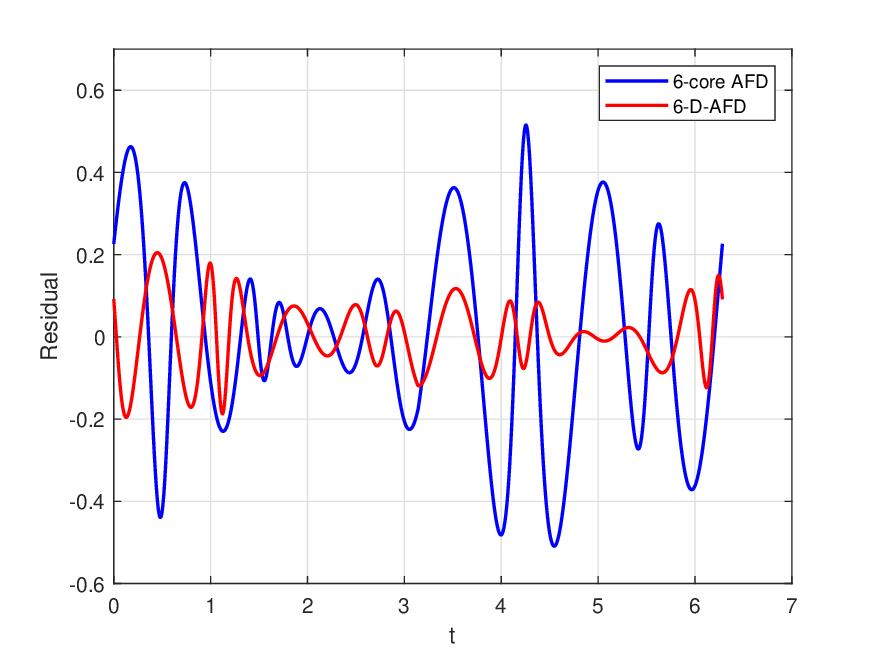}
		}
		\quad
		\subfigure{
			\includegraphics[width=4.5cm]{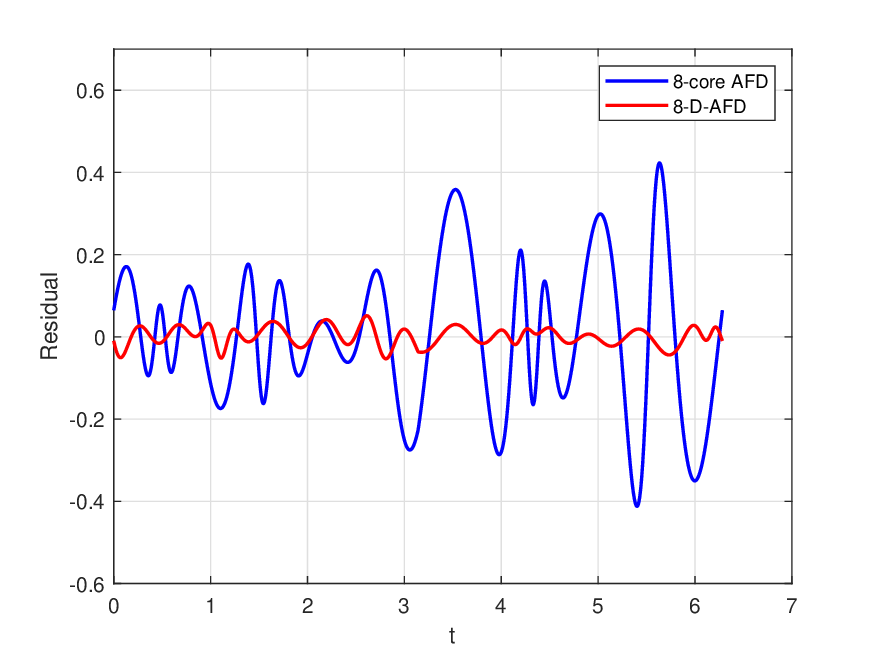}
		}
		\quad
		\subfigure{
			\includegraphics[width=4.5cm]{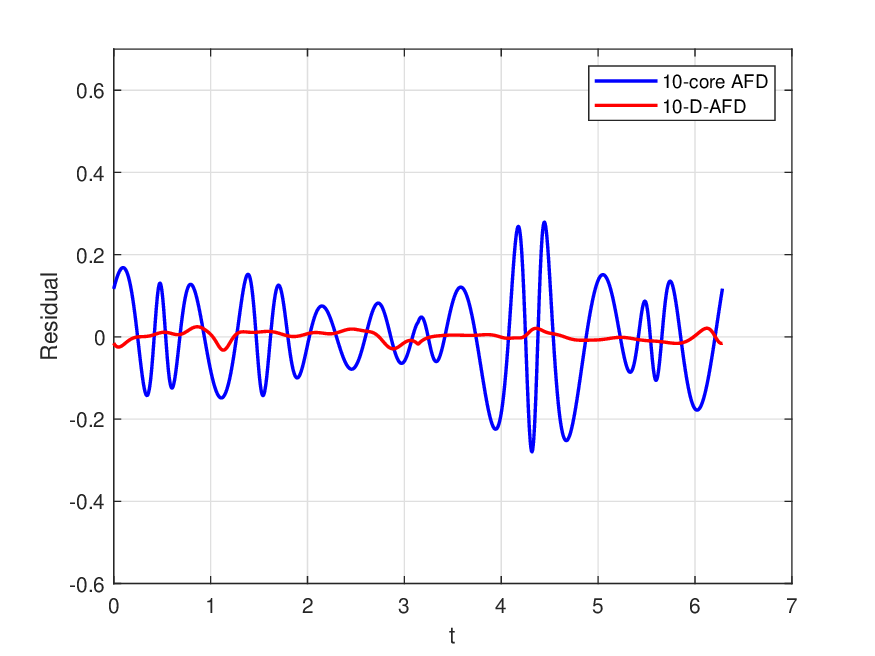}
		}
		\caption{Residuals of the $n$-term approximations for Example~1 obtained by core AFD and D-AFD.}
		\label{f2}		
	\end{figure}
\begin{figure}[H]
\centering
\begin{minipage}[t]{0.48\textwidth}
    \centering
    \includegraphics[width=\linewidth]{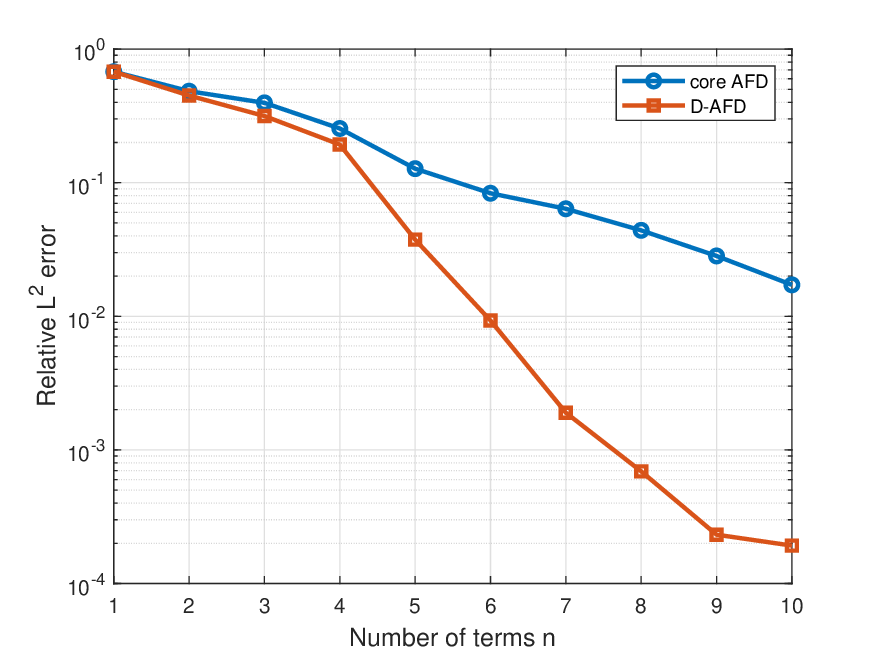}
    \caption{Relative $L^2$ error decay versus the number of terms $n$.}
    \label{f3}
\end{minipage}\hfill
\begin{minipage}[t]{0.48\textwidth}
    \centering
    \includegraphics[width=\linewidth]{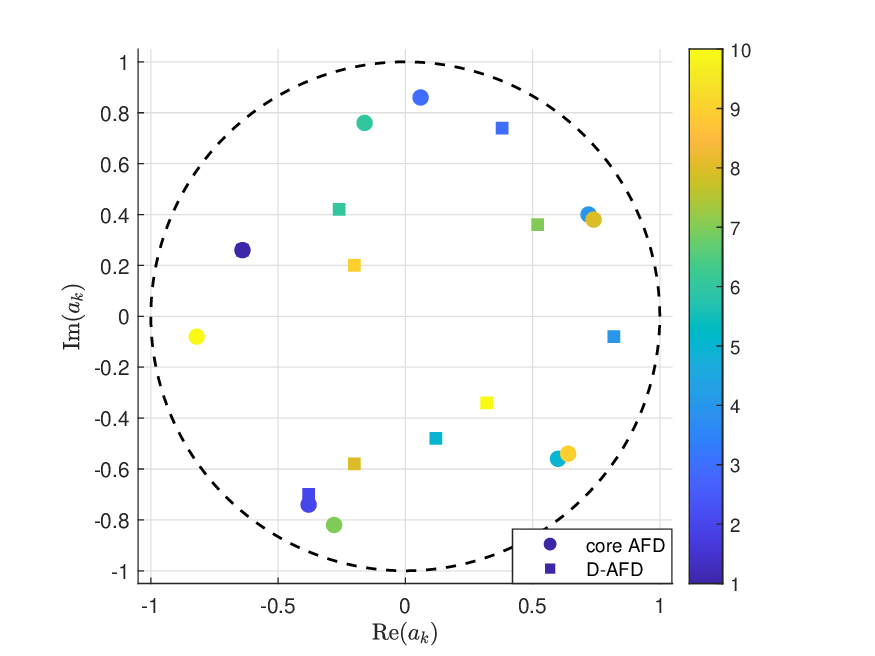}
    \caption{Distribution of the selected parameters $a_k$($k=1,\ldots,10$) in the unit disk.}
    \label{f4}
\end{minipage}
\end{figure}

\begin{example}\label{ex:szego_linear_comb}
We consider the following function on $[0,2\pi]$ defined by a linear combination of five Szeg\"{o} kernels:
\begin{equation}\label{eq:ex2_szego}
f(t)=\sum_{k=1}^{5} c_k\,K_{a_k}\!\left(e^{it}\right),\qquad t\in[0,2\pi],
\end{equation}
where $K_a$ denotes the Szeg\"{o} kernel on the unit disk $\mathbb{D}$,
\begin{equation}\label{eq:szego_kernel}
K_a(z)=\frac{1}{1-\overline{a}\,z},\qquad z\in\mathbb{D},\ |a|<1,
\end{equation}
and the parameters $\{a_k\}_{k=1}^5\subset\mathbb{D}$ and coefficients $\{c_k\}_{k=1}^5\subset\mathbb{C}$ are chosen to be
\begin{equation}\label{eq:ex2_params}
\begin{aligned}
(a_1,a_2,a_3,a_4,a_5)
&=\bigl(0.20+0.20i,\ 0.55-0.15i,\ -0.30+0.40i,\ 0.75+0.05i,\ -0.10-0.60i\bigr),\\
(c_1,c_2,c_3,c_4,c_5)
&=\bigl(1.0,\ -0.7,\ 0.4,\ 0.9,\ -0.5\bigr).
\end{aligned}
\end{equation}

This example is used to evaluate the approximation performance of the core AFD and the D-AFD on a rational-type function generated by Szeg\"{o} kernels, whose analytic structure is compatible with Hardy space modeling on the unit disk.
\end{example}

The numerical reconstructions of Example~2 produced by the core AFD and the D-AFD are displayed in Fig.~\ref{f11} for several truncation levels. As the number of terms increases, both approximations capture the overall profile of the target function increasingly well. For each fixed $n$, however, the D-AFD approximation follows the target curve more faithfully, especially around the regions with rapid variation, which is also evidenced by the reduced oscillation magnitude in the corresponding residual plots shown in Fig.~\ref{f12}. A quantitative comparison is provided in Fig.~\ref{f13}, where the relative $L^{2}$ error of the D-AFD decreases more rapidly with respect to $n$, indicating a more efficient use of terms. Finally, Fig.~\ref{f14} reports the locations of the selected parameters $a_k$ in the unit disk, suggesting that the two methods explore distinct geometric sampling patterns, which correlates with their different convergence behaviors.

 	\begin{figure}[H]
       \scriptsize
		\centering
		\subfigure{
			\includegraphics[width=4.5cm]{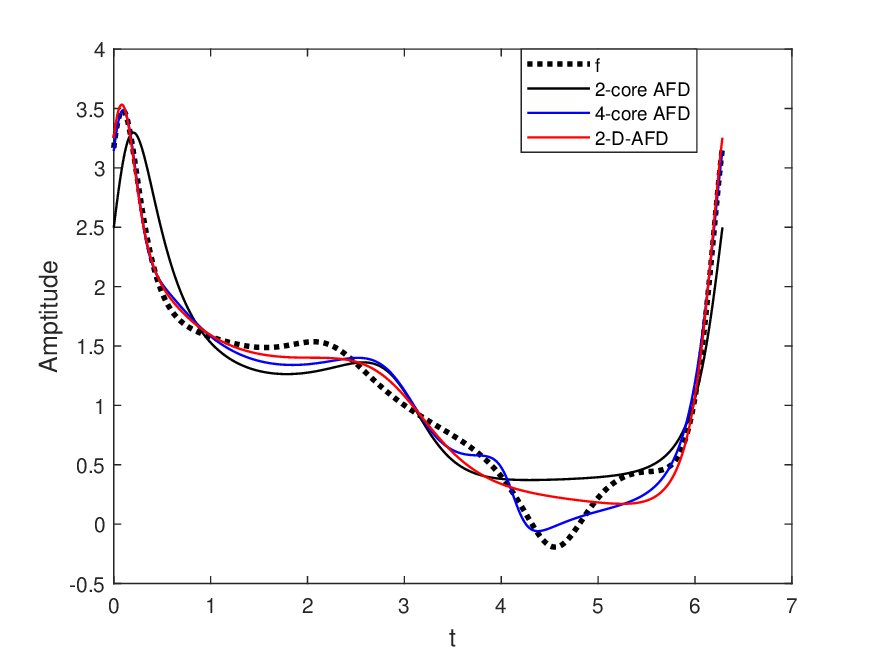}
		}
		\quad
		\subfigure{
			\includegraphics[width=4.5cm]{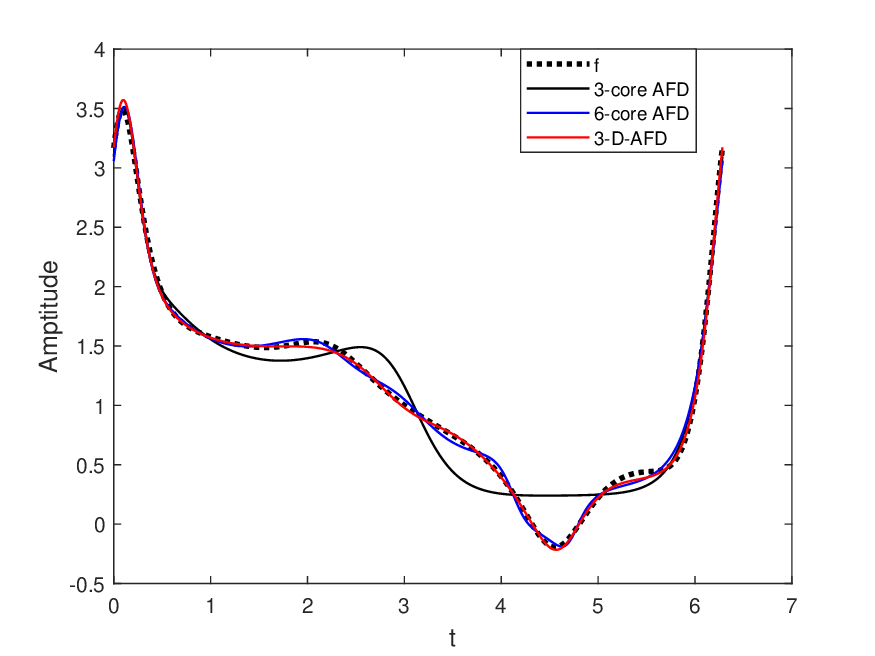}
		}
		\quad
		\subfigure{
			\includegraphics[width=4.5cm]{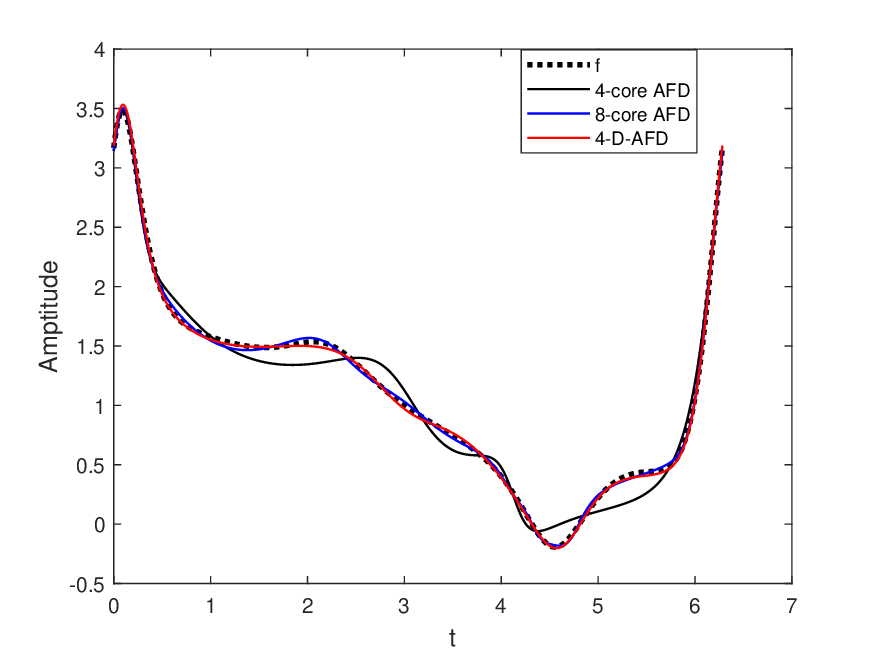}
		}
		\caption{Reconstruction results of Example 2 obtained by core AFD (blue) and D-AFD (red), compared with the target function $f$ (black dashed).}
		\label{f11}		
	\end{figure}
 	\begin{figure}[H]
       \scriptsize
		\centering
		\subfigure{
			\includegraphics[width=4.5cm]{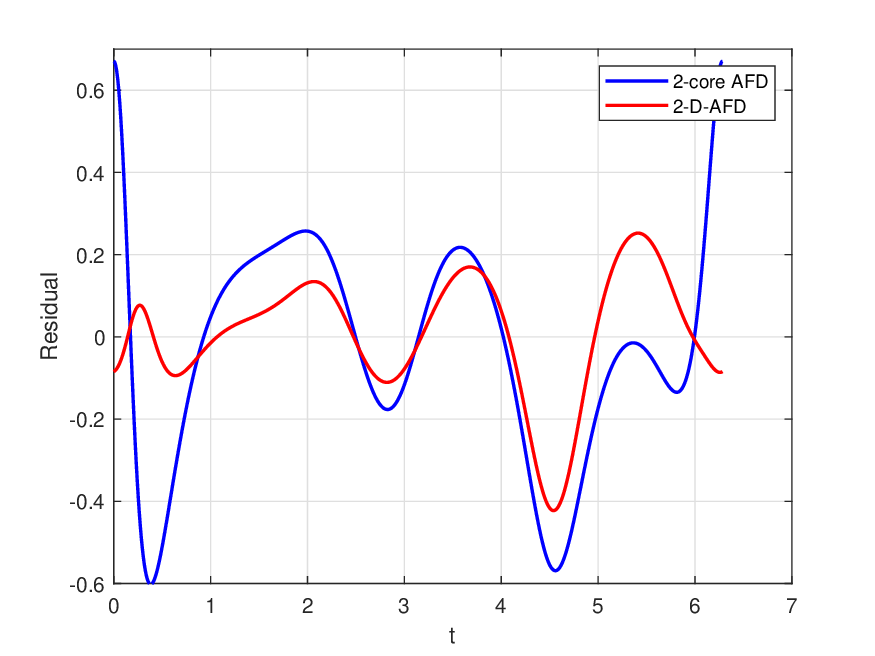}
		}
		\quad
		\subfigure{
			\includegraphics[width=4.5cm]{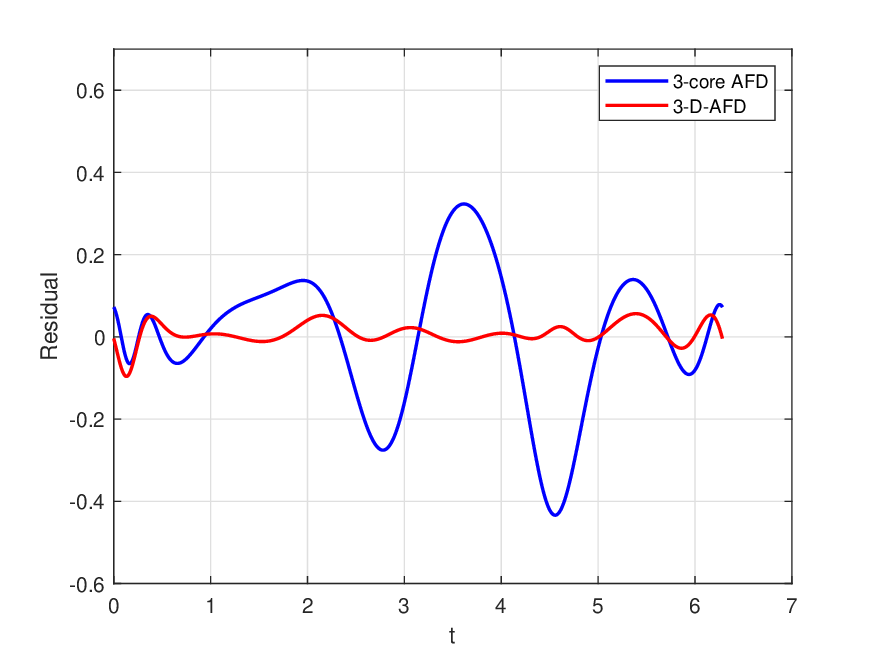}
		}
		\quad
		\subfigure{
			\includegraphics[width=4.5cm]{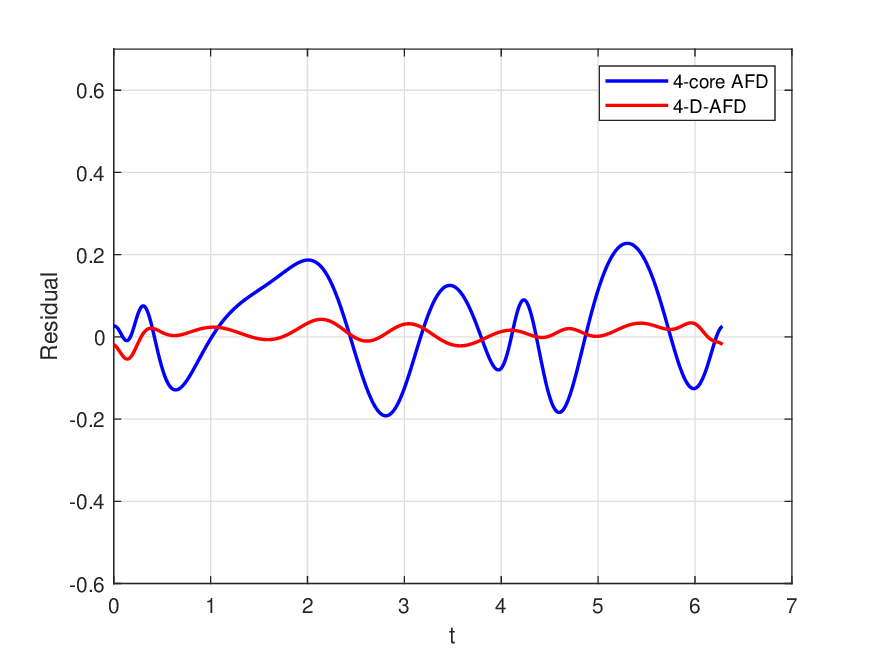}
		}
		\caption{Residuals of the $n$-term approximations for Example~2 obtained by core AFD and D-AFD.}
		\label{f12}		
	\end{figure}

\begin{figure}[H]
\centering
\begin{minipage}[t]{0.48\textwidth}
    \centering
    \includegraphics[width=\linewidth]{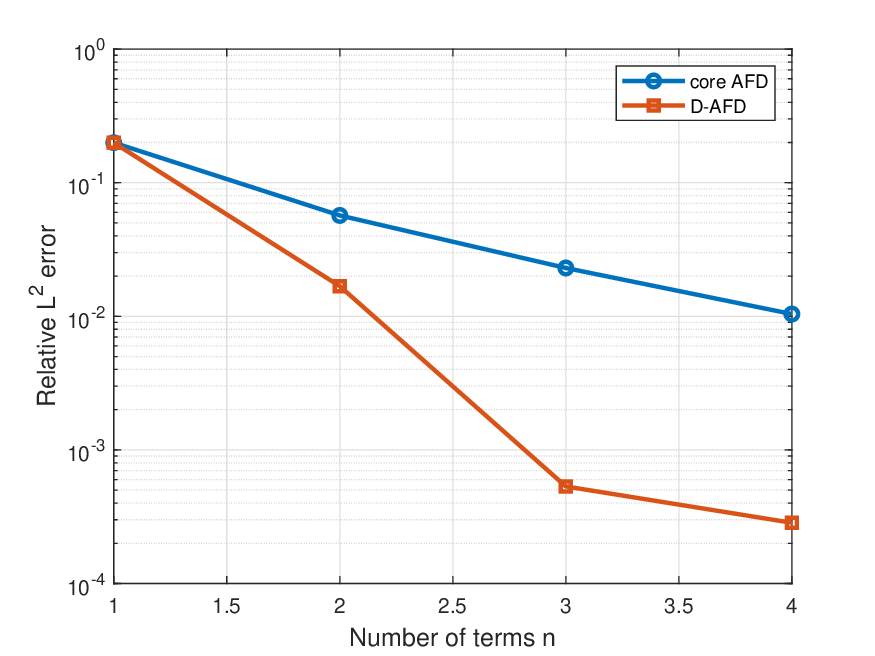}
    \caption{Relative $L^2$ error decay versus the number of terms $n$.}
    \label{f13}
\end{minipage}\hfill
\begin{minipage}[t]{0.48\textwidth}
    \centering
    \includegraphics[width=\linewidth]{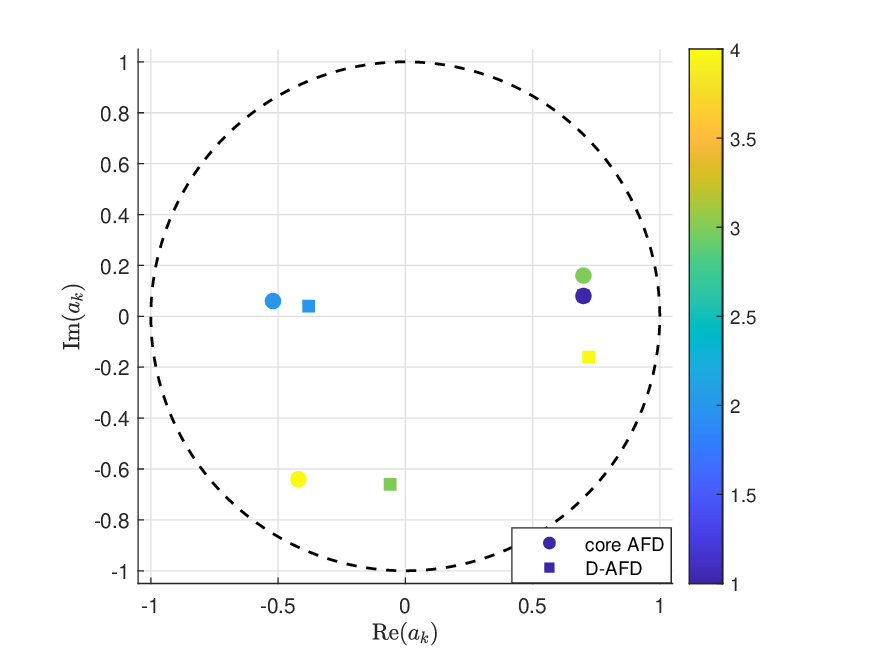}
    \caption{Distribution of the selected parameters $a_k$($k=1,\ldots,4$) in the unit disk.}
    \label{f14}
\end{minipage}
\end{figure}

\section{Higher Order Generalization and Concluding Remarks and Questions}

Let $f$ belong to $H^2(\mathbb{D}).$
Denote, for $a\in \mathbb{D}, f_2(z)=\frac{f(z)-\langle f,e_a\rangle e_a(z)}{\frac{z-a}{1-\overline{a}z}}=-\overline{a}f(z)+(1-|a|^2)\frac{f(z)-f(a)}{z-a}, z\ne a.$ In this section
 $a$ is not necessary to be optimally selected according to the energy principle. The following is a generalization of Lemma \ref{ccore}.

\begin{lemma}\label{finite sum generalization}
  Let $f_2\in H^2(\mathbb{D})$ be as above defined. If $a$ satisfies the differential equation
  \begin{eqnarray}\label{re}-\overline{a}f^{(k-1)}(a)+\frac{1-|a|^2}{k}f^{(k)}(a)=0,\end{eqnarray}
  then
  \[ \lim_{z\to a} \frac{f_2(z)}{(z-a)^{k-1}}=0.\]
 For such $a$, there exists $\tilde{f}_2(z)$ belonging to $ H^2(\mathbb{D})$ such that
 $$\tilde{f}_2(z)=\frac{\tilde{g}_2(z)}{\left(\frac{z-a}{1-\overline{a}z}\right)^{k+1}},$$
 where $\tilde{g}_2=f-\langle f,e_a\rangle e_a.$
\end{lemma}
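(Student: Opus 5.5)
The plan is to read off the Taylor coefficients of $f_2$ at $a$ directly from the closed form
\[ f_2(z)=-\overline{a}f(z)+(1-|a|^2)\frac{f(z)-f(a)}{z-a}, \]
exactly as in the proof of Lemma \ref{ccore}, which is the case $k=1$. Since $f$ is holomorphic in $\mathbb{D}$, I expand $f(z)=\sum_{j\ge 0}\frac{f^{(j)}(a)}{j!}(z-a)^j$ near $a$. Then $\frac{f(z)-f(a)}{z-a}=\sum_{m\ge 0}\frac{f^{(m+1)}(a)}{(m+1)!}(z-a)^m$, so the coefficient of $(z-a)^m$ in $f_2$ is
\[ \frac{1}{m!}\left[-\overline{a}f^{(m)}(a)+\frac{1-|a|^2}{m+1}f^{(m+1)}(a)\right]. \]
This is precisely the left side of (\ref{re}) with $k$ replaced by $m+1$, divided by $m!$.

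To conclude $\lim_{z\to a}f_2(z)/(z-a)^{k-1}=0$ I need the coefficients for $m=0,1,\dots,k-1$ to vanish, i.e. (\ref{re}) must hold for every index $1,\dots,k$, not only for the top one. I will therefore read the hypothesis (\ref{re}) as the system of equations for all indices up to $k$; this reading is consistent with Lemma \ref{ccore}, whose condition is the index-$1$ case. With that reading, $f_2(z)=(z-a)^k h(z)$ with $h$ holomorphic near $a$, and the limit statement follows at once.

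For the second assertion, I note that $\tilde{g}_2=f-\langle f,e_a\rangle e_a=\phi_a f_2$, where $\phi_a(z)=\frac{z-a}{1-\overline{a}z}$, so $\tilde{g}_2$ has a zero of order at least $k+1$ at $a$. It remains to check that dividing an $H^2(\mathbb{D})$ function with a zero of order $k+1$ at $a$ by $\phi_a^{k+1}$ stays in $H^2(\mathbb{D})$. The quotient is holomorphic in $\mathbb{D}$ because the singularity at $a$ is removable, and $\phi_a$ is bounded away from $0$ outside a small disc around $a$. Also $|\phi_a|=1$ on $\partial\mathbb{D}$, and $\phi_a^{k+1}$ is an inner (finite Blaschke) factor. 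So by the inner–outer factorization, or simply by repeating $k+1$ times the single-factor division already used in (\ref{reduced}), the quotient $\tilde{f}_2$ lies in $H^2(\mathbb{D})$ with $\|\tilde{f}_2\|=\|\tilde{g}_2\|$.

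The only real obstacle is the interpretive one above. If (\ref{re}) is imposed for the single index $k$ alone, the conclusion generally fails, since the lower coefficients need not vanish. The proof therefore rests on the cumulative system of conditions, which I would state explicitly.
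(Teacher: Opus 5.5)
Your argument is correct and is essentially the paper's own: the paper also expands $f$ in a Taylor series at $a$ and, via $k-1$ applications of L'H\^opital, identifies $\lim_{z\to a} f_2(z)/(z-a)^{k-1}$ with the coefficient $\frac{1}{(k-1)!}\bigl[-\overline{a}f^{(k-1)}(a)+\frac{1-|a|^2}{k}f^{(k)}(a)\bigr]$, and then takes membership of the quotient in $H^2(\mathbb{D})$ for granted. Your caveat is also justified. Those L'H\^opital steps tacitly require the conditions for the lower indices $1,\dots,k-1$, since the lower Taylor coefficients of $f_2$ must vanish. With only the index-$k$ condition the statement fails: $f(z)=z$, $a=0$, $k=2$ satisfies the hypothesis (because $f''\equiv 0$), yet $f_2\equiv 1$ and $\tilde{g}_2/\phi_a^{3}=z^{-2}\notin H^2(\mathbb{D})$.
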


Note that when $k=1$ the relation (\ref{re}) is a consequence of the optimality of $a_1$ proved in Lemma \ref{ccore}.

\textit{Proof of Lemma \ref{finite sum generalization}.}
Using the Taylor expansion for $f$ in the expression of $f_2,$ followed by applying $k-1$ times of the $L'H\hat{o}pital$ principle, there hold
 \begin{eqnarray*} \lim_{z\to a}\frac{f_2(z)}{(z-a)^{k-1}}&=&
 \frac{1}{(k-1)!}\lim_{z\to a}\left[-\overline{a}f^{(k-1)}(z)+\frac{1-|a|^2}{k}f^{(k)}(z)\right]\\
 &=&\frac{1}{(k-1)!}\left[-\overline{a}f^{(k-1)}(a)+\frac{1-|a|^2}{k}f^{(k)}(a)\right].\end{eqnarray*}
 When the last quantity is equal to zero, it concludes that the quotient $\frac{f_2(z)}{(z-a)^{k-1}}$ is a function in $H^2(\mathbb{D}).$ The proof is complete.

 \begin{theorem}\label{ge}
   Let $k$ be arbitrary but fixed positive integer and $a_l,l=1,\cdots,n,$ are inside $\mathbb{D}$ satisfying, respectively, the equations
   \begin{eqnarray}\label{eq}-\overline{a_l}\tilde{f}_l^{(k-1)}(a_l)+\frac{1-|a_l|^2}{k}\tilde{f}_l^{(k)}(a_l)=0,\end{eqnarray}
 where $$\tilde{f}_l(z)=\frac{\tilde{g}_l(z)}{\left(\frac{z-a}{1-\overline{a}z}\right)^{k+1}}$$
 and
 $$\tilde{g}_l(z)=f(z)-\sum_{j=1}^{l-1}\langle \tilde{f}_j,e_{a_j}\rangle e_{a_j}(z)\prod_{s=1}^{j-1}
\left(\frac{z-a_s}{1-\overline{a}_sz}\right)^{k+1}
 $$
 are well defined and satisfy
 $$f(z)=\sum_{j=1}^{n}\langle \tilde{f}_j,e_{a_j}\rangle e_{a_j}(z)\prod_{s=1}^{j-1}
\left(\frac{z-a_s}{1-\overline{a}_sz}\right)^{k+1}+\tilde{f}_{n+1}(z)\prod_{s=1}^{n}
\left(\frac{z-a_s}{1-\overline{a}_sz}\right)^{k+1},$$ and $\tilde{f}_{n+1}\in H^2(\mathbb{D}).$
 \end{theorem}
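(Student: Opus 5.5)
The proof will be an induction on $n$ that repeats, at each step, the one-step factorisation of Lemma \ref{finite sum generalization}, and then telescopes exactly as in the derivation of (\ref{second}) for D-AFD. Throughout, write $\phi_a(z)=\frac{z-a}{1-\overline{a}z}$. I read the denominator in the definition of $\tilde f_l$ as the full product $\prod_{s=1}^{l-1}\phi_{a_s}^{k+1}$, in analogy with (\ref{fract}). The printed $\phi_a^{k+1}$ with a single unspecified $a$ is presumably a typo.

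\emph{Base step.} Take $\tilde f_1=f$ and $a=a_1$ in Lemma \ref{finite sum generalization}. This gives $\tilde f_2\in H^2(\mathbb{D})$ with
\[
f-\langle \tilde f_1,e_{a_1}\rangle e_{a_1}=\phi_{a_1}^{k+1}\tilde f_2 .
\]
Membership of the quotient in $H^2(\mathbb{D})$ follows from the standard fact used after (\ref{reduced}): if $h\in H^2(\mathbb{D})$ vanishes to order $k+1$ at $a$, then $h/\phi_a^{k+1}$ is again in $H^2(\mathbb{D})$. Indeed, $|\phi_a|=1$ on the circle, so the quotient has the same norm, and it is analytic at $a$ because the zero is removable.

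\emph{Inductive step.} Assume that
\[
f=\sum_{j=1}^{l-1}\langle \tilde f_j,e_{a_j}\rangle e_{a_j}\prod_{s<j}\phi_{a_s}^{k+1}+\tilde f_l\prod_{s<l}\phi_{a_s}^{k+1},\qquad \tilde f_l\in H^2(\mathbb{D}).
\]
Then $\tilde g_l=\tilde f_l\prod_{s<l}\phi_{a_s}^{k+1}$, which is the consistency of the two definitions of $\tilde f_l$; this is proved as in Lemma \ref{3=}. Apply Lemma \ref{finite sum generalization} to $\tilde f_l$ at the point $a_l$, using hypothesis (\ref{eq}). This yields $\tilde f_l-\langle\tilde f_l,e_{a_l}\rangle e_{a_l}=\phi_{a_l}^{k+1}\tilde f_{l+1}$ with $\tilde f_{l+1}\in H^2(\mathbb{D})$. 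Multiply this identity by $\prod_{s<l}\phi_{a_s}^{k+1}$ and substitute it into the inductive hypothesis. The result is the $l$-th identity with $l$ replaced by $l+1$, together with $\tilde g_{l+1}=\tilde f_{l+1}\prod_{s\le l}\phi_{a_s}^{k+1}$. Taking $l=n$ gives the asserted expansion and $\tilde f_{n+1}\in H^2(\mathbb{D})$.

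\emph{Main obstacle.} The substantive step is the order of vanishing in Lemma \ref{finite sum generalization}. Writing $f_2=\sum_m \beta_m (z-a)^m$ with
\[
\beta_m=\frac{1}{m!}\Big(-\overline{a}f^{(m)}(a)+\frac{1-|a|^2}{m+1}f^{(m+1)}(a)\Big),
\]
the requirement that $f-\langle f,e_a\rangle e_a$ vanish to order $k+1$ at $a$ is equivalent to $\beta_0=\cdots=\beta_{k-1}=0$. A single equation of the form (\ref{re}) kills only $\beta_{k-1}$. So I would first verify the Taylor computation of $\beta_m$, which is a direct rearrangement of the expression for $f_2$. I would then make explicit that (\ref{eq}) is to be imposed for every order $1,\dots,k$, or equivalently understood cumulatively: for $k=1$ the condition $\beta_0=0$ is supplied by the optimality argument of Lemma \ref{ccore}, and for higher $k$ the lower conditions are part of the hypothesis. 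With that reading the lemma gives the zero of order $k$ of $f_2$, hence of order $k+1$ after multiplying back by $\phi_a$, and the induction above goes through with no further analytic difficulty.
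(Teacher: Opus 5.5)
Your argument is the paper's: Theorem \ref{ge} is obtained by applying Lemma \ref{finite sum generalization} to $\tilde f_l$ at $a_l$ and telescoping exactly as in the derivation of (\ref{second}), with the denominator of $\tilde f_l$ read, as you do, as $\prod_{s<l}\phi_{a_s}^{k+1}$. The obstacle you flag is genuine and is a defect of the paper, not of your proof: the paper's $k-1$ applications of L'H\^opital's rule tacitly presuppose $\beta_0=\cdots=\beta_{k-2}=0$, and for $k\ge 2$ the single equation (\ref{eq}) does not suffice (take $k=2$, $n=1$, $a_1=0$, $f(z)=z$: the hypothesis reduces to $f''(0)=0$, yet $\tilde f_2=z/z^3=z^{-2}\notin H^2(\mathbb{D})$), so imposing (\ref{eq}) for all orders $1,\dots,k$, as you propose, is necessary, and with that hypothesis your induction is complete.
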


The condition (\ref{eq}) for the case $k=1$ of Theorem \ref{ge} is a consequence of the optimality of $a_1,\cdots,a_n$ as proved in Lemma \ref{ccore}. The optimality also guarantees the convergence and the convergence rate results proved in \S 3 and \S 4, respectively. For $k>1,$ the theorem provides a sufficient condition  (\ref{eq}) for holdness of a $(k+1)$-order interpolation, without concerning the convergence aspect as $n\to \infty.$ The condition (\ref{eq}) itself, on the other hand, is hard to be verified. Therefore, a triple or any other higher order generalization beyond double interpolation (\ref{second}) and Double AFD (\ref{series}) do not seem to be easily available.

\begin{remark}
  Experiments show that in the reconstruction aspect Double AFD superperforms Core AFD. Experiments and knowledge in digital signal processing all point that Double AFD is an improvement of Core AFD. However, one would need a rigorous proof. On the other hand, theoretical proofs or comparison of effectiveness of greedy type algorithms are difficult, for a present un-optimal selection may end up with better results in the future.
\end{remark}

\begin{remark}
  It is known that TM systems are Gram-Schmidt orthogonalization of the Szeg\"o (reproducing) kernels (Appendix of \cite{QSAFD}). It is a question whether a Double TM system is orthogonalization of a sequence basic functions in a dictionary.
\end{remark}

\begin{remark}
It does not seem that D-AFD can be easily extended to random fields $f(w,t)$ to form, say, a stochastic D-AFD \cite{QSAFD}. If could, it would  mean that there exists a single selection of a parameter $a$ that, for a.s. $w,$ gives rise to the remainders
$f(w,z)-\langle f_w,e_a\rangle e_a(z)$ all having the common factor $(z-a)^2.$ A common factor $(z-a)$ may be achieved due to the reproducing kernel property. A double zero factor, on the other hand, as seen, depends on optimality of the same $a$ for a.s. $w.$ The same reasoning causes the defeat of generalization to multivariate cases (functions of several complex variables). It is the same reason that makes a stochastic generalization of Blaschke unwinding expansion unavailable (\cite{CS,CP,Qinner}).
\end{remark}

\begin{remark}
All the concepts and techniques of double interpolation and D-AFD may be adapted to the upper-half complex plane context. The theme may be extended to contexts beyond Hardy spaces. We will discuss some general cases in a separate paper.
\end{remark}

 \vspace{0.1in}
 {\bf{Acknowledgement.}}

This work was supported by the Major Project of Guangzhou National Laboratory [grant number GZNL2024A01004]; the Science and Technology Development Fund of Macau SAR [grant number 0015/2025/AFJ]; the National Natural Science Foundation of China [grant number 12401127];and the WuHan Textile University[grant number 20230612].

 \vspace{0.1in}
 {\bf{Conflicts of Interest.}}
The authors declare that there are no conflicts of interest.


\begin{thebibliography}{99}

\bibitem{B1}
L. Baratchart, M. Olivi, F. Wielonsky,
On a rational approximation problem in the real Hardy space $H_2$,
Theor. Comput. Sci. 94 (2) (1992) 175--197.


\bibitem{CS}
R. Coifman, S. Steinerberger,
Nonlinear phase unwinding of functions,
J. Fourier Anal. Appl. 23 (2017) 778--809.

\bibitem{CP}
R.R. Coifman, J. Peyri\`ere,
Phase unwinding, or invariant subspace decompositions of Hardy spaces,
J. Fourier Anal. Appl. 25 (2019) 684--695.

\bibitem{DT}
R.A. DeVore, V.N. Temlyakov,
Some remarks on greedy algorithms,
Adv. Comput. Math. 5 (1996) 173--187.

\bibitem{Gar}
J.B. Garnett,
\emph{Bounded Analytic Functions},
Academic Press, New York, 1987.

\bibitem{JHQW}
M. Jin, Z.-T. Han, T. Qian, J.-X. Wang,
A direct proof for existence of best approximation of random signals,
Complex Anal. Oper. Theory 20 (2026) 19-20.

\bibitem{JQSW}
M. Jin, T. Qian, I. Sabadini, J.-X. Wang,
N-best adaptive Fourier decomposition for slice hyperholomorphic functions,
Adv. Math. 480 (2025) 110498.

\bibitem{LQ}
C. Lin, T. Qian,
Frequency analysis with multiple kernels and complete dictionary,
Appl. Math. Comput. 466 (2024) 128477.

\bibitem{QianPhase}
T. Qian,
Boundary derivatives of the phases of inner and outer functions and applications,
Math. Methods Appl. Sci. 32 (2009) 253--263.

\bibitem{Qinner}
T. Qian,
Intrinsic mono-component decomposition of functions: An advance of Fourier theory,
Math. Methods Appl. Sci. 33 (7) (2010) 880--891.

\bibitem{QSAFD}
T. Qian,
A sparse representation of random signals,
Math. Methods Appl. Sci. 45 (8) (2022) 4210--4230.

\bibitem{QWa1}
T. Qian, Y.B. Wang,
Adaptive Fourier series---a variation of greedy algorithm,
Adv. Comput. Math. 34 (2010) 279--293.

\bibitem{QWa2}
T. Qian, Y.B. Wang,
Remarks on adaptive Fourier decomposition,
Int. J. Wavelets Multiresolut. Inf. Process. 11 (1) (2013) 1350008.


\bibitem{QQD}
W. Qu, T. Qian, G.-T. Deng,
A stochastic sparse representation: N-best approximation to random signals and computation,
Appl. Comput. Harmon. Anal. 55 (2021) 185--198.

\bibitem{QQLZ}
W. Qu, T. Qian, H.C. Li, K.H. Zhu,
Best kernel approximation in Bergman spaces,
Appl. Math. Comput. 416 (2022) 126749.

\bibitem{QZLQ}
T. Qian, Y. Zhang, W. Liu, W. Qu,
Adaptive Fourier decomposition-type sparse representations versus the Karhunen--Lo\`eve expansion for decomposing stochastic processes,
Math. Methods Appl. Sci. 46 (13) (2023) 14007--14025.

\bibitem{W}
J.L. Walsh,
\emph{Interpolation and Approximation by Rational Functions in the Complex Plane},
American Mathematical Society, Providence, RI, 1969.

\end{thebibliography}
\end{document}